\documentclass[reqno]{amsart}
\usepackage[marginratio=1:1]{geometry}
\usepackage[hidelinks]{hyperref}
\usepackage{calc}
\newsavebox\CBox
\newcommand\hcancel[2][0.5pt]{%
  \ifmmode\sbox\CBox{$#2$}\else\sbox\CBox{#2}\fi%
  \makebox[0pt][l]{\usebox\CBox}%
  \rule[0.5\ht\CBox-#1/2]{\wd\CBox}{#1}}
\usepackage{blindtext}
\usepackage{color}
\usepackage{hyperref,url}
\usepackage{comment}
\usepackage{float}
\usepackage{hyperref}
\usepackage{enumerate}
\usepackage{enumitem}   
\usepackage{bbm}
\usepackage{cancel}
\usepackage{mathrsfs}
\usepackage{colonequals}
\usepackage{pdfpages}

\usepackage{amsmath,amsfonts,amsthm,bm}
\usepackage{mathrsfs}  
\usepackage{xcolor}
\usepackage{amsfonts}
\usepackage{amssymb}
\usepackage{amsmath}
\usepackage{mathtools}
\usepackage{mathrsfs}  

\usepackage[normalem]{ulem}

\numberwithin{equation}{section}
\usepackage[toc,page]{appendix}
\usepackage{tikz-cd}
\theoremstyle{definition}

\newtheorem{definicao}{Definition}[section]

\theoremstyle{plain}

\newtheorem{teorema}[definicao]{Theorem}
\newtheorem{suposicao}{Hypothesis}

\newtheorem{lema}[definicao]{Lemma}
\newtheorem{proposicao}[definicao]{Proposition}
\newtheorem{corolario}[definicao]{Corollary}

\newenvironment{TheoremproofA}[1]{\par\noindent{\emph{Proof of Theorem \ref{thm1}}} \space#1}{\leavevmode\unskip\penalty9999 \hbox{}\nobreak\hfill\quad\hbox{$\qed$}}
\newenvironment{TheoremproofB}[1]{\par\noindent{\emph{Proof of Corollary \ref{cor3}.}}  }{\leavevmode\unskip\penalty9999 \newline\hbox{}\nobreak\hfill \quad\hbox{$\qed$}}
\newenvironment{TheoremproofC}[1]{\par\noindent{\textit{Proof of Theorem \ref{thm2}}} \space#1}{\leavevmode\unskip\penalty9999 \hbox{}\nobreak\hfill\quad\hbox{$\qed$}}
\usepackage{scalerel}

\definecolor{roxo}{rgb}{0.44, 0.16, 0.39}
\definecolor{ao(english)}{rgb}{0.0, 0.5, 0.0}
\definecolor{dmagenta}{RGB}{139, 0, 139}
\definecolor{dgreen}{RGB}{0,90,0}
\definecolor{navy}{RGB}{0,0,128}

\usepackage{stackengine}

\definecolor{iblue}{RGB}{0, 35, 194}
\title[ANOVA]{Stability of quasi-stationary measures in high-dimensional products of mixing Markov chains}

\author[Matteo Tanzi]{Matteo Tanzi$^{2}$}
\email{\href{mailto:matteo.tanzi@kcl.ac.uk}{matteo.tanzi@kcl.ac.uk}}

\author[Giuseppe Tenaglia]{Giuseppe Tenaglia$^{1}$}
\email{\href{mailto:giuseppe.tenaglia20@imperial.ac.uk}{giuseppe.tenaglia20@imperial.ac.uk}}

\address{$^{2}$Department of Mathematics, King's College London, Strand, London WC2R 2LS, UK}

\address{$^{1}$Department of Mathematics, Imperial College London, London SW7 2AZ, UK}

\allowdisplaybreaks
\begin{document}

\subjclass[2020]{37C30, 37A30, 60J05}

\keywords{quasi-stationary measures,  high-dimensional dynamical systems, transfer operators, ANOVA/Hoeffding decomposition, spectral stability, Markov chains.}

\begin{abstract}
We study high-dimensional conditioned dynamics obtained from an arbitrary number of independent copies of a mixing Markov chain. The dynamics is conditioned to avoid a family of holes whose stationary measure vanishes as the dimension grows, and we ask whether the resulting process admits a quasi-stationary measure close to the stationary product measure. Our problem is motivated by conditioning on holes with complicated geometry in high dimension, such as sets arising naturally from large deviations of suitable observables.

Our main tool is the ANOVA decomposition, which separates functions according to their dependence on different subsets of coordinates. This allows us to exploit the product structure of the dynamics and obtain contraction estimates that are uniform in the dimension. Combined with a Keller–Liverani perturbation argument, these estimates yield the existence of quasi-stationary densities converging to the stationary density as the size of the hole vanishes. Under stronger one-step mixing and regularization assumptions, we obtain sharper convergence in a Sobolev norm, with a square-root dependence on the measure of the hole. The improvement relies on the increasingly strong contraction of higher-order ANOVA components, thereby overcoming the usual loss of control with dimension. Finally, we apply our results to additive-noise Markov chains on the circle with smooth, uniformly positive transition densities
\end{abstract}
\maketitle

\section{Introduction}
\label{intro}

A fundamental problem in dynamical systems is understanding the behavior of complex systems. These are high-dimensional systems arising from the interaction of an arbitrary number of particles, each with its own individual dynamics. An important phenomenon observed in high-dimensional systems is metastability, namely the coexistence of a unique asymptotic equilibrium with long-lived transient regimes. In many cases, the timescales over which such phenomena persist grow with the dimension of the system, so that, when the number of particles is very large, they can become essentially indistinguishable from genuine asymptotic behavior \cite{YasodharanSundaresan2023,DawsonSidAliZhao2023,LocherbachMonmarche2022}. In addition, if the underlying system is well approximated by some limiting dynamical system, when the number of particles tends to infinity, such transient phenomena can be experienced as equilibria by the limiting system \cite{CagliotiRousset2007,YamaguchiEtAl2004,LocherbachMonmarche2022}.

In a recent work \cite{JournelLeBris2025}, the authors study the continuous-time Curie--Weiss model, in a certain regime  where the $N$-spin system has a unique invariant measure, but the corresponding infinite particle limiting dynamics, obtained as $N \to \infty$, has two stable equilibria. In this work, they associate these equilibria with certain probability measures for the $N$-spin system, called quasi-stationary measures, which describe the statistics of the transient dynamics conditioned  not to enter a certain prescribed region, commonly referred to as a hole. The mass this measure retains outside the hole is called rate of survival: the closer  is to one, the longer one expects the transient phenomena to last. 
In \cite{JournelLeBris2025}, the authors prove that, given an equilibrium measure  for the mean-field system, the $N$-spin system conditioned not to enter, roughly, the complement of a suitable metastable region associated with this equilibrium admits a quasi-stationary measure converging  to the given equilibrium when $N \to \infty$, whilst its rate of survival converges to $1.$

This naturally raises the question of whether the result of \cite{JournelLeBris2025} extends to more general mean-field systems. We are particularly interested in answering this question for discrete-time mean-field coupled dynamical systems, whose underlying one-particle dynamics is either expanding or a random system driven by additive diffusive noise. Indeed, for this class, recent work has  begun to investigate the connection between relevant invariant or metastable states of the finite-dimensional dynamics and equilibria of its infinite particle limit \cite{CastorriniGalatoloTanzi2026,Tanzi2022}, building on an 
already rich literature regarding the latter \cite{SelleyBalint2016,SelleyTanzi2021,Galatolo2022,Tanzi2023}.

Establishing the correspondence in \cite{JournelLeBris2025} for our target class presents non-trivial challenges. The key issue is that one needs to control simultaneously the interaction between the particles and the conditioning induced by the hole. 
Beyond the uniformly expanding setting
treated in \cite{Tanzi2022}, however, dimension-uniform control of the
finite-dimensional interaction remains largely open, especially for
discrete-time systems with noise. Furthermore, despite the presence of a rich literature on conditioned dynamics \cite{PianigianiYorke1979,LiveraniMaumeDeschamps2003,CastroLambOliconMendezRasmussen2024,ChampagnatVillemonais2023}, substantially less is known about conditioned finite-dimensional dynamics in high-dimensional mean-field systems.

In this work, we study a problem of intermediate difficulty. Indeed, we disregard the mean-field interaction term and study the conditioned dynamics of an arbitrary number of independent copies of a mixing one-dimensional Markov chain on the circle $\mathbb{T}$. 
This allows us to isolate the technical difficulties arising from the
conditioning, while retaining a close connection with the motivating
mean-field model: although the latter is not a product system, for each particle, law-of-large-numbers-type phenomena make the interaction term progressively less sensitive to the specific configuration of the other particles as the dimension grows.

More precisely, for any $N\in \mathbb{N}$, we consider $N$ independent copies of a Markov chain in $\mathbb{T}$ satisfying one of the following mixing assumptions:
\begin{itemize}
\item  Hypothesis \ref{A}, in which we assume the chain admits a stationary measure $\mu$, such that its transfer operator $\mathcal{P}_{\mu}$ eventually contracts functions in $L^2(\mu)$ with zero $\mu$-integral;
\item  Hypothesis \ref{B}, in which we require a more quantitative notion of mixing: the existence of a stationary measure $\mu$ for which the transfer operator $\mathcal{P}_{\mu}$ contracts, in one step, all functions in $L^2(\mu)$ with zero $\mu$-integral.
\end{itemize}
Furthermore, in Hypothesis \ref{B}, we assume the regularizing property that the transfer operator is bounded from $L^2(\mu)$ to the Sobolev space $W^{1,2}(\mu)$ defined in \eqref{sobolevspace} (see \eqref{regularization}). This means essentially that there exists $C>0$ such that
\begin{equation}\label{aiutofuoco}
||\mathcal{P}_{\mu} ||_{L^2(\mu)\to W^{1,2}(\mu)} \le C.
\end{equation}

Such a property is naturally satisfied in the presence of sufficiently smooth noise \cite{Galatolo2022}.

Given a one-dimensional mixing chain $\{X_n\}_{n \ge 0}$ satisfying either Hypothesis $\ref{A}$ or Hypothesis \ref{B} with stationary measure  $\mu$, we denote by $\mu_N$ its $N$-fold product measure  and consider a family of holes $\{H_N\}_{N\ge 0}$ satisfying
\begin{equation}\label{cooldecay}
\mu_N(H_N) \to 0,\qquad N \to \infty.
\end{equation}
We ask whether the dynamics of the $N$ independent copies of $\{X_n\}_{n \ge 0}$, conditioned not to enter $H_N$, admits a quasi-stationary measure close  to $\mu_N$,  whose rate of survival converges to one.

Under the mixing assumption (Hypothesis \ref{A})  and the vanishing condition \eqref{cooldecay},  Theorem \ref{thm1}  establishes that, for all $p \in (1,\infty),\varepsilon \in \left(0,\frac{1}{p}\right) $, if $N$ is large enough, then the conditioned dynamics admits a  quasi-stationary measure with survival rate $\lambda_N$, whose density $\psi_N \in L^p(\mu_N)$  satisfies
\begin{equation}\label{sol1}
\|\psi_N-1 \|_{L^p(\mu_N)} \lesssim \mu_N(H_N)^{\frac{1}{p}-\varepsilon},\qquad |1-\lambda_N|\lesssim    \mu_N(H_N)^{1-\varepsilon} 
\end{equation}
Furthermore, under the stronger one-step  mixing and smoothing  assumption in Hypothesis \ref{B},  Theorem \ref{thm2} yields the stronger estimate
\begin{equation}\label{sol2}
\| \psi_N -1\|_{W^{1,2}(\mu_N)}\lesssim \sqrt{\mu_N(H_N)}, \qquad |1-\lambda_N| \lesssim \mu_N(H_N)
\end{equation}
where $W^{1,2}(\mu_N)$ is the Sobolev space defined in \eqref{Ndimsobolev}.

As an application of Theorem \ref{thm1} and \ref{thm2}, we consider a one-dimensional chain of the form
\begin{align*}
x_{n+1} = f(x_n) + \omega_n \mod 1, 
\end{align*}
where $\{\omega_n \}_{n}$ are i.i.d. random variables drawn from $[0,1]$ with a $C^1$ density $h$ equivalent to Lebesgue (see Hypothesis \ref{C} for a more precise definition). In Corollary \ref{cor3}, we prove, via a coupling argument, that all the Markov chains in this class satisfy Hypothesis \ref{B}, so that both Theorems 2.1 and 2.2 apply in this setting.

In order to obtain such results, we follow the operator-theoretic approach. Let  $\mathcal{P}_{\mu,N}$ be the transfer operator associated with the $N$-dimensional chain, and let $H_N$ be a hole. Consider the conditioned operator
\begin{equation}\label{op1}
\mathcal{P}_{\mu,H_N}(\phi):=\mathcal{P}_{\mu,N}(1_{\mathbb{T}^N\setminus H_N}\phi).
\end{equation}
Then, any normalized non-negative eigenfunction $\psi_N$ with  eigenvalue $\lambda_N>0$ of this operator is a quasi-stationary density for the high-dimensional process, conditioned not to enter $H_N$, and  $\psi_N d\mu_N$ is the quasi-stationary measure with rate of survival $\lambda_N$.
Equivalently, quasi-stationary densities correspond to non-negative fixed points of the normalized conditioned operator
\begin{equation}\label{op2}
\bar {\mathcal{P}}_{\mu,H_N}(\phi):=\mathcal{P}_{\mu,N}\left(\frac{1_{\mathbb{T}^N\setminus H_N}\phi}{|| 1_{\mathbb{T}^N\setminus H_N}\phi||_{L^1(\mu_N)}}\right)
\end{equation}
Our strategy is to compare either \eqref{op1} or \eqref{op2} with the unconditioned operator $P_{\mu,N}$, exploiting the vanishing condition \eqref{cooldecay} through a suitable perturbative argument.

To proceed with it, we need to resolve two issues. The first is to derive the properties of $\mathcal{P}_{\mu,N}$, which in our case is the transfer operator associated with $N$ independent copies of a mixing chain, using only the information on the one-dimensional chain and its operator $\mathcal{P}_{\mu}$ either in Hypothesis \ref{A} or Hypothesis \ref{B}.
The second is to find a suitable perturbative approach to compare $\mathcal{P}_{\mu,N}$ with either $\mathcal{P}_{\mu,H_N}$ or $\bar{\mathcal{P}}_{\mu,H_N}$ (see \eqref{op1} and \eqref{op2}), when $N$ is large enough. The classical  low-dimensional small holes perturbation arguments \cite{KellerLiverani1999} are not suitable to this task. Indeed, they rely on the one-dimensional embedding $BV \to L^{\infty}$, whose $N$-dimensional analogue  $BV \to L^{\frac{N}{N-1}}$
allows us to mimic the methods in \cite{KellerLiverani1999} either when the measure of the hole decays sufficiently fast, or when, as in \cite{BahsounSelley2022,BahsounPhalempin2026}, the hole retains a  particular local structure.

To solve these issues,  we introduce, in the analysis of high dimensional systems with holes, an orthogonal decomposition of $L^2(\mu_N)$ called ANOVA. This is a standard tool in applied mathematics and statistics \cite{Hoeffding1948,EfronStein1981,Sobol1993}, which has also been employed to study operators acting on high-dimensional probability spaces \cite{Mossel2010GaussianBounds}. 
The key property of the ANOVA decomposition used in this work is that
it decomposes  $L^2(\mu_N)$ in orthogonal  subspaces, which are preserved and eventually contracted by $\mathcal{P}_{\mu,N}$, when the underlying one-dimensional chain satisfies Hypothesis \ref{A} or \ref{B}.

To prove Theorem \ref{thm1}, we first use the ANOVA decomposition to obtain dimension-free contraction estimates for $\mathcal{P}_{\mu,N}$ in the subspace of $L^2(\mu_N)$ of $\mu_N$-mean zero functions. An interpolation inequality, along with a Keller-Liverani argument using the weak-strong pair $L^p(\mu_N)/L^q(\mu_N)$ with $p<q$ , gives a spectral gap for the conditioned operator in $\mathcal{P}_{\mu,H_N}$ (see \eqref{op1}), along with the precise estimates on the spectral radius and the spectral projections needed to prove \eqref{sol1}.

To prove Theorem \ref{thm2}, we use  the ANOVA decomposition twice. First we use it to show that the normalized conditioned operator $\bar{\mathcal{P}}_{\mu,H_N}$ (see \eqref{op2}) has a unique fixed probability density $\psi_N$ in a neighborhood of $1$ in $L^2(\mu_N)$ of size $\sim \sqrt{\mu_N(H_N)}$, then we use it again, along with the one-dimensional smoothing estimate \eqref{aiutofuoco}, to establish an analogous higher-dimensional smoothing estimate for $\mathcal{P}_{\mu,N}$, with a dimension-free constant. This estimate, along with the properties of  $\psi_N$, give  the result.

The last application of ANOVA in  Theorem \ref{thm2} is the most technical result of the paper.  It crucially uses the fact that the  ANOVA decomposes $L^2(\mu_N)$ into subspaces uniquely identified by the subset $u\subset \{1,\dots,N\}$, in such a way that functions in  the component associated with $u$ depend only on the coordinates indexed by $u$. Under the one-step mixing assumption for the underlying one-dimensional chain in Hypothesis \ref{B}, the operator $\mathcal{P}_{\mu,N}$ preserves and  contracts each component with a  contraction factor that decays exponentially with the number $|u|$ of active coordinates (see Lemma \ref{preserbationANOVA}). In particular, the rate of contraction becomes exponentially stronger with the number of active coordinates, and this allows us to obtain a dimension-free version of \eqref{aiutofuoco}, thus defeating the curse of dimensionality.

The paper is organized as follows. In Section \ref{Section2} we state our main results. In Section \ref{Section3}  we introduce the ANOVA decomposition and use it to establish dimension-free contraction estimates for the transfer operator $\mathcal{P}_{\mu,N}$ associated with the $N$ independent copies of the underlying one-dimensional chain. Sections \ref{Section4} and~\ref{Section5}  are devoted to the proofs of Theorems~\ref{thm1} and~\ref{thm2}, respectively. In Section~\ref{Section6}, we prove Corollary~\ref{cor3}, showing that the class of additive-noise chains considered in Hypothesis~\ref{C} satisfies the assumptions of our abstract results. Finally, Appendix \ref{appenix} contains the quantitative Keller--Liverani perturbation result used in the proof of Theorem~\ref{thm1}.

\section{Hypotheses and main results}\label{Section2}
Given a Markov chain $\{X_n\}_{n\geq 0}$ on an underlying probability space $(\Omega,\Sigma,\mathbb{P})$, taking values in $\mathbb{T}$, let $\{p(x,\cdot)\}_{x\in\mathbb{T}}$ denote its transition kernel, which is the family of probability measures on $\mathbb{T}$ satisfying
\begin{align*}
\mathbb{P}(X_{n+1} \in A|X_n = x) = p(x,A),\qquad \forall x \in \mathbb{T}, n \in \mathbb{N}.
\end{align*}
The transition kernel induces, on the space of finite measures in $\mathbb{T}$, the following one-step evolution
\begin{equation}\label{timeev}
\bar p(\nu)(A)
:=
\int_{\mathbb{T}}p(x,A)d\nu(x),\qquad \forall\, \text{measurable}\,\,A \subset \mathbb{T}.
\end{equation} 

Any stationary measure for $\{X_n\}_{n \ge 0}$ is a fixed point of $\bar p$. Note that,  if $\mu$ is a stationary measure, and $A \subset \mathbb{T}$ has $\mu$ measure $0$ then  by \eqref{timeev}, for $\mu$-a.e. $x \in \mathbb{T}$   $p(x,A)=0$. This, in particular, implies that the space of  measures absolutely continuous with respect to $\mu$ is left invariant by $\bar p$. When restricted to this space, the evolution of a measure $\nu$ via  $\bar p$ is equivalent to evolution of its density $\phi:= \frac{d\nu}{d\mu}$ via the transfer operator 
\begin{align*}
\mathcal P_{\mu}&\colon L^1(\mu) \mapsto L^1(\mu),\\
\mathcal P_{\mu}(\phi)
&:=
\frac{d\bar p\bigl((\phi d\mu)\bigr)}{d\mu}.
\end{align*}

Our first assumption is that $\{X_n\}_{n \ge 0}$ is a mixing one-dimensional Markov chain.

\begin{suposicao}\label{A}
The one-dimensional chain $\{X_n\}_{n\ge 0}$ admits a stationary  measure $\mu$ for which the associated transfer operator $\mathcal{P}_{\mu}$ satisfies, for some $C>0,\gamma \in (0,1)$ and for every $k\geq 0$
\begin{equation}\label{geometricergodicity}
\left|\mathcal P_\mu^k \phi \right|_{L^2(\mu)}
\leq
C\gamma^k\left|\phi \right|_{L^2(\mu)}, \qquad \forall \phi \in L^2(\mu) \colon \int \phi d\mu =0
\end{equation}
\end{suposicao}

Given a Markov chain $\{X_n\}_{n \ge 0}$ satisfying Hypothesis \ref{A}, with stationary measure $\mu$ and transfer operator $\mathcal{P}_{\mu}$,
for $N \in \mathbb{N}$, let $\mu_N$ denote the product  measure $\mu_N := \underbrace{\mu \otimes \cdots \otimes \mu}_{N\text{ times}}$ and let $\mathcal{P}_{\mu,N}$ be the $N$-fold tensor product operator, which is the unique bounded linear operator satisfying
\begin{equation}\label{uncoupledop}
\mathcal{P}_{\mu,N}\left(\otimes_{i=1}^N \phi_i\right) = \otimes_{i=1}^N\mathcal{P}_{\mu}(\phi_i),\qquad \forall \phi_1,\dots, \phi_N \in L^1(\mu).
\end{equation}
Note that $\mathcal{P}_{\mu,N}$ is the transfer operator associated with  the $N$-dimensional Markov chain  $\{X^N_n\}_{n \ge 0}$, formed from \(N\)-independent copies of the one-dimensional chain $\{X_n\}_{n \ge 0}$.  This chain satisfies
\begin{equation}\label{high-dim}
\mathbb{P}\left\{X_{n+1}^N \in A_1\times \dots \times A_N| X^N_n = (x_1,\dots,x_N)\right\} = \prod_{j=1}^N \mathbb{P}\left\{X_{n+1} \in A_j|X_n = x_j\right\}.
\end{equation}

A key property that will be used later, is that if $\mu$ is a stationary measure for $\{X_n\}_{n \ge 0}$, then, due to \cite[Theorem 13.2]{EisnerEtAl2015} for all $N \in \mathbb{N}$, the $N$-fold operator $\mathcal P_{\mu,N}$ is contractive in $L^p(\mu_N)$ for any $p \ge 1$:
\begin{equation}\label{usedlater}
||\mathcal{P}_{\mu,N} ||_{L^p(\mu_N)\to L^p(\mu_N)} \le 1.
\end{equation}

Given a measurable set $H_N\subset \mathbb{T}^N$, let us define the conditioned operator as
\begin{equation}\label{conditioned}
\mathcal{P}_{\mu,H_N}(\phi)(x) = \mathcal{P}_{\mu,N}(\phi 1_{\mathbb{T}^N\setminus H_N})(x).
\end{equation}
This operator encodes the dynamics of the process obtained  by conditioning the high-dimensional chain $\{X^N_n\}_{n \ge 0}$ in \eqref{high-dim}  on never entering  $H_N$. For this reason,  we refer to $H_N$ as a hole. For this conditioned process, we say that a probability density $\psi_N$ is a \emph{quasi-stationary density}, if $\psi_N$ is a non-negative eigenfunction associated with an eigenvalue $\lambda_N>0$ of $\mathcal{P}_{\mu,H_N}$. Then, $\psi_N d\mu_N$ is  a quasi-stationary measure. These measures generalize the notion of stationary measures for conditioned processes.

Our first result shows that, for every fixed $p\in(1,\infty)$, given an underlying chain $\{X_n\}_{n\ge0}$ satisfying Hypothesis \ref{A}, and given a family of holes
$\{H_N\}_{N\geq 0}$ satisfying
\begin{equation}\label{holesdecay}
\mu_N(H_N)\to 0,
\qquad N\to\infty,
\end{equation}
the spectra of $\mathcal P_{\mu,N}$ and $\mathcal P_{\mu,H_N}$, considered as operators on
$L^p(\mu_N)$, are eventually close. In particular, this implies that, for $N$ large enough, the process $\{X_n^N\}_{n\ge 0}$ in \eqref{high-dim}, conditioned not to enter $H_N$, has a quasi-stationary density $\psi_N$ close to $1$ in $L^p(\mu_N)$.

\begin{teorema}\label{thm1}
Let $\{X_n\}_{n \ge 0}$ be a Markov chain satisfying Hypothesis \ref{A}, with stationary measure $\mu$ and transfer operator $\mathcal P_\mu$. Let $\mathcal{P}_{\mu,N}$ be the $N$-fold operator of $\mathcal{P}_{\mu}$ as in \eqref{uncoupledop} and, given a sequence of holes $\{H_N\}_{N\ge0}$ satisfying \eqref{holesdecay}, let $\mathcal{P}_{\mu,H_N}$ be as in \eqref{conditioned}.
 Then, for all $p \in (1,\infty)$, $\varepsilon \in (0,\frac{1}{p})$, there exists $N_{p,\varepsilon} \in \mathbb{N}$, constants $C_{p,\varepsilon}, \bar C_{p,\varepsilon}>0$ and $\theta_{p} \in (0,1)$ such that, for all $N \ge N_{p,\varepsilon}$, 
 \begin{equation}\label{spectrum}
 \sigma_{L^p(\mu_N)}(\mathcal{P}_{\mu,H_N}) \subset B_{\theta_p}(0)\cup B_{\bar C_{p,\varepsilon}\mu_N(H_N)^{1-\varepsilon}}(1)
 \end{equation}
and there is a unique simple eigenvalue $\lambda_{N,p} \in B_{\bar C_{p,\varepsilon}\mu_N(H_N)^{1-\varepsilon}}(1)$, whose associated non-negative eigenfunction $\psi_{N,p}$, normalized so that
$\int \psi_{N,p}d\mu_N =1,$
satisfies
\begin{equation}\label{qsm}
||\psi_{N,p}-1||_{L^p} \le C_{p,\varepsilon} \mu_N(H_N)^{\frac{1}{p}-\varepsilon}, 
\end{equation}
\end{teorema}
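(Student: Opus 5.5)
The plan is to run a Keller--Liverani perturbation argument for the pair $\mathcal P_{\mu,N}$ and $\mathcal P_{\mu,H_N}$, with $L^p(\mu_N)$ as the strong space and $L^{r}(\mu_N)$, $r<p$ close to $p$, as the weak space. I will then read off the eigenvalue and eigenfunction estimates from the quantitative version in the Appendix.

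\textbf{Step 1: dimension-free spectral gap in $L^p$.} By the ANOVA decomposition, each component $L^2_u$ with $u\neq\emptyset$ is preserved by $\mathcal P_{\mu,N}$. On $L^2_u$ the operator acts as $\mathcal P_\mu^{\otimes |u|}$ restricted to functions that have zero mean in each active coordinate. Writing $\mathcal P_\mu^k = C\gamma^k$-contraction on mean-zero functions, Hypothesis \ref{A} gives a bound $\|\mathcal P_{\mu,N}^k|_{L^2_u}\|\le \min(1,C\gamma^k)$, uniformly in $u$. One contracting coordinate suffices here, using $\|\mathcal P_\mu\|\le 1$ on the other coordinates. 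Orthogonality of the components then yields
\[
\|\mathcal P_{\mu,N}^k\phi\|_{L^2}\le C\gamma^k\|\phi\|_{L^2}\quad\text{for } \int\phi\,d\mu_N=0,
\]
with $C,\gamma$ independent of $N$. To pass to $L^p$, I interpolate via Riesz--Thorin between this $L^2$ bound and the trivial bound $\|\mathcal P_{\mu,N}-\Pi\|_{L^s\to L^s}\le 2$ for $s$ near $1$ or $\infty$, where $\Pi\phi=\int\phi\,d\mu_N$ and \eqref{usedlater} gives the trivial bound. This produces $\|(\mathcal P_{\mu,N}-\Pi)^k\|_{L^p\to L^p}\le C_p\theta_p'^k$ with $\theta'_p<1$ independent of $N$. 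It also gives the same bound on $L^r$. Hence $\mathcal P_{\mu,N}=\Pi+Q$ with $\sigma(Q)\subset B_{\theta'_p}(0)$ uniformly in $N$.

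\textbf{Step 2: Keller--Liverani ingredients.} The conditioned operator is a contraction on every $L^s$, so the Lasota--Yorke type inequalities are trivial: iterates are uniformly bounded in both the strong and the weak norm. The compact-like part needed for the spectral argument comes from the uniform decomposition of Step 1. Since the perturbation is not small in $L^p\to L^p$ operator norm, I compare in the mixed norm. By Hölder with $\frac1r=\frac1p+\frac1s$,
\[
\|(\mathcal P_{\mu,N}-\mathcal P_{\mu,H_N})\phi\|_{L^r}\le\|1_{H_N}\phi\|_{L^r}\le \mu_N(H_N)^{1/s}\|\phi\|_{L^p}.
\]
Choosing $r$ close to $1$ makes $1/s$ close to $1/p$. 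The rate of the eigenprojection perturbation is then $\mu_N(H_N)^{1/p-\varepsilon}$, giving \eqref{qsm}. For the eigenvalue I would compute directly:
\[
1-\lambda_N=\int 1_{H_N}\psi_N\,d\mu_N\le \mu_N(H_N)^{1-1/p}\|\psi_N\|_{L^p}.
\]
This is not yet $1-\varepsilon$. The remedy is to use the freedom in $p$: the eigenfunction $\psi_N$ does not depend on $p$ once it lies in all $L^{p'}$, and it is bounded in $L^{p'}$ for large $p'$. Applying the estimate with $p'$ large gives $\mu_N(H_N)^{1-1/p'}$. Alternatively one can use the formula $1-\lambda=\langle 1,1_{H}\psi\rangle$, with the dual eigenvector close to $1$.

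\textbf{Step 3: conclusion and the main obstacle.} The Appendix's quantitative Keller--Liverani theorem gives the spectrum inclusion \eqref{spectrum}, a simple eigenvalue $\lambda_{N,p}$ near $1$, and $\|\Pi_{H_N}-\Pi\|_{L^p\to L^r}$ small. Positivity of $\psi_{N,p}$ follows from positivity of $\mathcal P_{\mu,H_N}$ together with the limit $\psi=\lim \lambda^{-n}\mathcal P_{\mu,H_N}^n1$. The main obstacle is the control of $\|\psi-1\|$ in the strong $L^p$ norm rather than in the weak $L^r$ norm, while keeping everything uniform in $N$. I would first upgrade using
\[
\psi-1=\lambda^{-n}\mathcal P_{\mu,H_N}^n(\psi-1)+\ldots
\]
and the interpolation $\|f\|_{L^p}\le\|f\|_{L^r}^{\alpha}\|f\|_{L^{q}}^{1-\alpha}$ with $q>p$, using uniform $L^q$ bounds on $\psi$. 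This interpolation is where the $\varepsilon$ loss enters.
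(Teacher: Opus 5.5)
Your Step 1 is fine and matches the paper. The gap is at the decisive point, the $L^p$ estimate \eqref{qsm}. In Proposition~\ref{Kl} the eigenfunction is controlled only in the \emph{weak} norm. With strong space $L^p(\mu_N)$ and weak space $L^r(\mu_N)$ you therefore get $\delta=\mu_N(H_N)^{1/r-1/p}$ and only $\|\psi_N-1\|_{L^r}\lesssim\mu_N(H_N)^{(1/r-1/p)(\eta-\beta)}$. (Also, $1/r-1/p\to 1-1/p$ as $r\to1$, not $1/p$.)

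The interpolation you propose cannot upgrade this to the stated rate. With $\|\psi_N-1\|_{L^q}$ only bounded and $\frac1p=\frac{\alpha}{r}+\frac{1-\alpha}{q}$, the exponent you obtain is
\[
\alpha\Bigl(\frac1r-\frac1p\Bigr)(\eta-\beta)=\frac{\eta-\beta}{\bigl(\frac1r-\frac1p\bigr)^{-1}+\bigl(\frac1p-\frac1q\bigr)^{-1}}<\frac1p\Bigl(1-\frac1p\Bigr).
\]
This is below $\frac1p-\varepsilon$ whenever $\varepsilon<1/p^2$.

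The missing idea is to put $L^p$ in the weak slot. Choose $q>p$ with $1/q<\varepsilon$ and apply Proposition~\ref{Kl} with $\mathcal B=L^q(\mu_N)$, $\widetilde{\mathcal B}=L^p(\mu_N)$. Then $\delta_N=\mu_N(H_N)^{1/p-1/q}$, and for $\gamma'$ close to $1$ and $\beta$ small you get $\|\psi_N-1\|_{L^p}\lesssim\delta_N^{\eta-\beta}\le\mu_N(H_N)^{1/p-\varepsilon}$ directly. Your pair $(L^p,L^r)$ is then needed only for the spectral inclusion on $L^p$. The two eigenvalues near $1$ coincide by uniqueness, since $L^q\subset L^p$. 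The eigenvalue bound also follows at once: $|1-\lambda_N|=\int_{H_N}\psi_N\,d\mu_N\le\mu_N(H_N)+\mu_N(H_N)^{1-1/p}\|\psi_N-1\|_{L^p}\lesssim\mu_N(H_N)^{1-\varepsilon}$.

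A second, smaller gap is in Step 2: the Lasota--Yorke inequality is not trivial. Condition \eqref{lasotayorkeKL} requires a factor $C_1\gamma^n$ with $\gamma<1$ in front of the strong norm, and power-boundedness of $\mathcal P_{\mu,H_N}$ gives only $\gamma=1$. The decomposition $\mathcal P_{\mu,N}=\Pi+Q$ cannot replace it either. The difference $\mathcal P_{\mu,N}-\mathcal P_{\mu,H_N}=\mathcal P_{\mu,N}(1_{H_N}\,\cdot\,)$ is in general not small on $L^p(\mu_N)$. For the lazy chain $\mathcal P_\mu\phi=\frac12\phi+\frac12\int\phi\,d\mu$ and $H_N=A_N\times\mathbb T^{N-1}$, its norm is at least $\frac12$ (test on functions supported in $H_N$). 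So nothing in your argument controls $\sigma(\mathcal P_{\mu,H_N})$ in the annulus $\gamma'<|z|\le 1$.

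The correct inequality comes from positivity, $|\mathcal P^n_{\mu,H_N}\phi|\le\mathcal P^n_{\mu,N}|\phi|$, combined with your Step 1:
\[
\|\mathcal P^n_{\mu,H_N}\phi\|_{L^q}\le C_q\gamma_q^n\|\phi\|_{L^q}+\|\phi\|_{L^1},
\]
uniformly in $N$. This same inequality justifies the uniform $L^{p'}$ bound on $\psi_N$ that your eigenvalue argument uses: apply it to $\psi_N=\lambda_N^{-n}\mathcal P^n_{\mu,H_N}\psi_N$ with $\|\psi_N\|_{L^1}=1$. With that bound in place, your alternative eigenvalue route is valid.
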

The above result can be strengthened under stronger mixing assumptions for the underlying  one-dimensional chain, along with a smoothing estimate for its associated transfer operator.

Let us consider the  one-dimensional Sobolev space 
\begin{equation}\label{sobolevspace}
W^{1,2}(\mu):=\left\{\phi \in L^2(\mu): \phi' \in L^2(\mu)\right\},
\end{equation}
where $\phi'$ denotes the weak derivative of $\phi$, endowed with the norm
\begin{align*}
||\phi||_{W^{1,2}(\mu)}:= |\phi|_{L^2(\mu)}+|\phi'|_{L^2(\mu)}
\end{align*}

\begin{suposicao}\label{B}
The one-dimensional chain $\{X_n\}_{n\ge 0}$ admits a stationary  measure $\mu$, equivalent to Lebesgue, for which the following holds:  there exists $\bar \gamma\in(0,1)$ such that
\begin{equation}\label{one_step}
\left|\mathcal P_\mu (\phi)\right|_{L^2(\mu)}
\leq
\bar \gamma  \left|\phi \right|_{L^2(\mu)},\qquad \forall \phi \in L^2(\mu)\colon \int_{\mathbb{T}}\phi d\mu=0.
\end{equation}
Furthermore, the operator 
$\mathcal{P}_{\mu} \colon L^2(\mu) \to W^{1,2}(\mu)$ is bounded, which means that  for all $\phi \in L^2(\mu)$, $\mathcal{P}_{\mu}(\phi)$ has a weak derivative $\mathcal{P}_{\mu}(\phi)'$ satisfying, 
\begin{equation}\label{regularization}
||\mathcal{P}_{\mu}(\phi)' ||_{L^2(\mu)}\le C_r ||\phi||_{L^2(\mu)},
\end{equation}
for some $C_r>0$.
\end{suposicao}
Note that Hypothesis \ref{B} implies Hypothesis \ref{A}.
Consider, for all $N\in \mathbb{N}$, the $N$-dimensional Sobolev space 
\begin{equation}\label{Ndimsobolev}
W^{1,2}(\mu_N):=\left\{\phi \in L^2(\mu_N): |\nabla \phi| \in L^2(\mu_N)\right\}    
\end{equation}
endowed with the norm
\begin{align*}
||\phi||_{W^{1,2}(\mu_N)}:= |\phi|_{L^2(\mu_N)}+|\nabla \phi|_{L^2(\mu_N)}
\end{align*}
Under  Hypothesis \ref{B}, we obtain the following strengthening of Theorem \ref{thm1}
\begin{teorema}\label{thm2}
Let $\{X_n\}_{n \ge 0}$ be a Markov chain satisfying Hypothesis \ref{B}, with stationary measure $\mu$ and transfer operator $\mathcal{P}_{\mu}$. Let $\mathcal{P}_{\mu,N}$ be the $N$-fold operator of $\mathcal{P}_{\mu}$ as in \eqref{uncoupledop} and, given a sequence of holes $\{H_N\}_{N\ge0}$ satisfying \eqref{holesdecay}, let $\mathcal{P}_{\mu,H_N}$ be as in \eqref{conditioned}.
Then, there exists $N_0 \in \mathbb{N}$ such that, for all $N \ge N_0$, the process $\{X^N_n\}_{n \ge 0}$ satisfying \eqref{high-dim}, conditioned on never entering  $H_N$, has a quasi-stationary density $\psi_N$ satisfying, for some $D_0>0$ independent of $N$
\begin{equation}\label{sobolev}
||\psi_N-1||_{W^{1,2}(\mu_N)} \le  D_0\sqrt{\mu_N(H_N)}.
\end{equation}
Furthermore, its associated eigenvalue $\lambda_N$ satisfies
\begin{equation}\label{rateofsurv}
|\lambda_N-1| \le  (1+D_0) \mu_N(H_N).
\end{equation}
\end{teorema}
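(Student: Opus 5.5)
Under Hypothesis \ref{B} the plan follows the route sketched in the introduction: we produce $\psi_N$ as a fixed point of the normalized operator $\bar{\mathcal{P}}_{\mu,H_N}$ in an $L^2(\mu_N)$-ball around $1$, and then upgrade the bound to $W^{1,2}(\mu_N)$ by a dimension-free smoothing estimate.

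\textbf{Step 1: the ANOVA decomposition.} We write $L^2(\mu_N)=\bigoplus_{u\subset\{1,\dots,N\}}V_u$, where $V_u$ consists of functions depending only on the coordinates in $u$ and having zero $\mu$-mean in each of them. Since $\mathcal{P}_{\mu,N}$ is a tensor product and $\mathcal P_\mu$ preserves $\mu$-means, each $V_u$ is invariant. On $V_u$ the operator acts as $\mathcal P_\mu^{\otimes u}$ restricted to mean-zero functions, so \eqref{one_step} gives
\[
\|\mathcal{P}_{\mu,N}\phi\|_{L^2}\le\bar\gamma^{|u|}\|\phi\|_{L^2},\qquad \phi\in V_u .
\]
In particular $\|\mathcal{P}_{\mu,N}\phi\|_{L^2}\le\bar\gamma\|\phi\|_{L^2}$ for every $\phi$ with $\int\phi\,d\mu_N=0$, uniformly in $N$.

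\textbf{Step 2: a contraction in $L^2$.} Set $h=\mu_N(H_N)$ and $A=\mathbb{T}^N\setminus H_N$. We consider $\bar{\mathcal{P}}_{\mu,H_N}$ on the convex set $K=\{\phi\ge0,\ \int\phi\,d\mu_N=1,\ \|\phi-1\|_{L^2}\le R\sqrt h\}$.
\begin{itemize}
\item For $\phi\in K$, Cauchy--Schwarz gives $\|1_A\phi\|_{L^1}\ge 1-h-R h$, which is bounded below.
\item The map $\bar{\mathcal{P}}_{\mu,H_N}$ preserves positivity and normalization.
\item Writing $\phi=1+g$ with $\int g\,d\mu_N=0$, we have
\[
\frac{1_A\phi}{\|1_A\phi\|_{L^1}}-1=\frac{1_A g-1_{H_N}+c}{1-c},\qquad c=\int_{H_N}\phi\,d\mu_N,
\]
and the numerator has zero mean.
\item Applying $\mathcal{P}_{\mu,N}$ and the contraction $\bar\gamma$ from Step 1, the $L^2$-norm of the image minus $1$ is at most $\bar\gamma(R\sqrt h+\sqrt h+O(h))/(1-O(\sqrt h))$. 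This is $\le R\sqrt h$ once $R>\bar\gamma/(1-\bar\gamma)$ and $N$ is large.
\end{itemize}
Hence $\bar{\mathcal{P}}_{\mu,H_N}(K)\subset K$. A similar computation for differences, $\|\bar{\mathcal P}\phi_1-\bar{\mathcal P}\phi_2\|_{L^2}\le(\bar\gamma+O(\sqrt h))\|\phi_1-\phi_2\|_{L^2}$, shows the map is a contraction on $K$. Banach's theorem then gives a unique fixed point $\psi_N$. It is a non-negative eigenfunction of $\mathcal{P}_{\mu,H_N}$ with eigenvalue
\[
\lambda_N=\|1_A\psi_N\|_{L^1}=1-\int_{H_N}\psi_N\,d\mu_N,
\]
so $|1-\lambda_N|\le h+\sqrt h\,\|\psi_N-1\|_{L^2}\le(1+R)h$. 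This is \eqref{rateofsurv} with $D_0\ge R$.

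\textbf{Step 3: dimension-free smoothing (the main obstacle).} We aim to prove $\|\nabla\mathcal{P}_{\mu,N}g\|_{L^2(\mu_N)}\le C\|g\|_{L^2(\mu_N)}$ with $C$ independent of $N$.
\begin{itemize}
\item Decompose $g=\sum_u g_u$ with $g_u\in V_u$. The partial derivative $\partial_i\mathcal{P}_{\mu,N}g_u$ vanishes unless $i\in u$.
\item For $i\in u$, the operator $\partial_i\mathcal{P}_{\mu,N}$ on $V_u$ is the tensor product of $\partial\mathcal P_\mu$ in coordinate $i$, bounded by $C_r$ via \eqref{regularization}, with mean-zero $\mathcal P_\mu$ in the other $|u|-1$ coordinates. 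Hence
\[
\|\partial_i\mathcal{P}_{\mu,N}g_u\|_{L^2}\le C_r\bar\gamma^{|u|-1}\|g_u\|_{L^2}.
\]
\item For fixed $i$, the functions $\partial_i\mathcal{P}_{\mu,N}g_u$, $u\ni i$, are still orthogonal, because differentiating in $x_i$ preserves the zero-mean property in the other coordinates of $u$. Therefore
\[
\|\nabla\mathcal{P}_{\mu,N}g\|_{L^2}^2=\sum_i\sum_{u\ni i}\|\partial_i\mathcal{P}_{\mu,N}g_u\|_{L^2}^2\le C_r^2\sum_u|u|\,\bar\gamma^{2(|u|-1)}\|g_u\|_{L^2}^2\le C\|g\|_{L^2}^2,
\]
since $\sup_k k\bar\gamma^{2k-2}<\infty$.
\end{itemize}
The delicate points are justifying the tensor-product form of the weak derivative and this orthogonality; that is where the bulk of the work lies.

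\textbf{Step 4: conclusion.} From $\psi_N=\lambda_N^{-1}\mathcal{P}_{\mu,N}(1_A\psi_N)$ we get $\nabla\psi_N=\lambda_N^{-1}\nabla\mathcal{P}_{\mu,N}(1_A\psi_N-1)$, since $\mathcal{P}_{\mu,N}1=1$ and $\nabla 1=0$. Using $\|1_A\psi_N-1\|_{L^2}\le\|\psi_N-1\|_{L^2}+\|1_{H_N}\|_{L^2}\le(R+1)\sqrt h$ and Step 3,
\[
\|\nabla\psi_N\|_{L^2}\le\lambda_N^{-1}C(R+1)\sqrt h .
\]
Together with the $L^2$ bound from Step 2 this gives \eqref{sobolev}, with $D_0$ chosen also large enough to cover \eqref{rateofsurv}.
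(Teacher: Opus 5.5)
Your proposal is correct and follows essentially the same route as the paper: a Banach fixed-point argument for $\bar{\mathcal P}_{\mu,H_N}$ on an $L^2(\mu_N)$-ball of radius of order $\sqrt{\mu_N(H_N)}$, driven by the dimension-free one-step ANOVA contraction. The paper takes the radius $\frac{4\bar\gamma}{1-\bar\gamma}\sqrt{\mu_N(H_N)}$, which satisfies your condition $R>\bar\gamma/(1-\bar\gamma)$. This is followed by the bound $C_r\bar\gamma^{|u|-1}$ on each $S_u$, orthogonality of the terms $\partial_i\mathcal P_{\mu,N}(\phi_u)$ for $u\ni i$, and summability of $m\bar\gamma^{2m-2}$, exactly as in Proposition \ref{prop:derivative-estimate-ANOVA}, with $|\lambda_N-1|$ then controlled through $\lambda_N=1-\int_{H_N}\psi_N\,d\mu_N$ and Cauchy--Schwarz.
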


\subsection{Application to Doeblin chains}
In this section, we finally specialize the abstract results to a class of additive-noise Markov chains satisfying a Doeblin-type condition. Essentially, we consider Markov chains on the unit circle, with a $C^1$ transition density uniformly bounded away from $0$.  
\begin{suposicao}\label{C}
The process $\{Y_n\}_{n\ge 0}$ has the form
\begin{equation}\label{doeblin}
Y_{n+1} = f(Y_n)+\omega_n \mod 1,
\end{equation}
where $f \colon \mathbb{T} \to \mathbb{R}$ is measurable, and
the random variables $\{\omega_n\}_{n \ge 0}$  are i.i.d. and drawn from a density $h \in  C^1(\mathbb{T})$ satisfying
\begin{equation}\label{estimates p}
0<c_1\le h(x)\qquad \forall x\in [0,1],
\end{equation}
and 
\begin{equation}\label{diffest}
|h'(x)|\le c_2, \qquad \forall x\in [0,1],
\end{equation}
for some $c_1,c_2 >0$.
\end{suposicao}

\begin{corolario}\label{cor3}

Let $\{Y_n\}_{n\ge 0}$ be a Markov chain satisfying Hypothesis \ref{C}. Then it satisfies Hypothesis \ref{B}.
\end{corolario}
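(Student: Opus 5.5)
We need to show that the additive-noise chain of Hypothesis \ref{C} has a stationary measure $\mu$ equivalent to Lebesgue, that $\mathcal P_\mu$ contracts mean-zero functions in one step in $L^2(\mu)$, and that $\mathcal P_\mu$ maps $L^2(\mu)$ boundedly into $W^{1,2}(\mu)$.

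\textbf{Step 1: the transition kernel.} Extend $h$ periodically to $\mathbb{T}$. The transition kernel has density $k(x,y)=h(y-f(x))$ with respect to Lebesgue measure $m$. Consequently
\[
c_1\le k(x,y)\le \|h\|_\infty,\qquad |\partial_y k(x,y)|\le c_2 .
\]
We take $\|h\|_\infty\le 1+c_2$, using that $\int h=1$ and the mean value theorem.

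\textbf{Step 2: the stationary measure.} On densities with respect to $m$ the evolution is $L\rho(y)=\int k(x,y)\rho(x)\,dx$. Doeblin's minorization $k\ge c_1$ gives a unique stationary density $\rho$, obtained by the standard coupling or total-variation contraction argument. Since $\rho=L\rho$, we get $c_1\le \rho\le \|h\|_\infty$ and $|\rho'|\le c_2$. So $\mu=\rho\,m$ is equivalent to Lebesgue with density bounded above and below.

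Then $\mathcal P_\mu\phi=\rho^{-1}L(\phi\rho)$, that is,
\[
\mathcal P_\mu\phi(y)=\int q(x,y)\phi(x)\,d\mu(x),\qquad q(x,y)=\frac{k(x,y)}{\rho(y)} .
\]
The kernel $q$ satisfies $\int q(x,y)\,d\mu(y)=1$ and $\int q(x,y)\,d\mu(x)=1$, and it is bounded below by $\delta:=c_1/\|h\|_\infty>0$.

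\textbf{Step 3: one-step contraction \eqref{one_step}.} This is the main point. Write $q=\delta+(1-\delta)r$, where $r\ge0$ and $r$ is doubly stochastic with respect to $\mu$. For $\int\phi\,d\mu=0$ the constant part annihilates $\phi$, so
\[
\mathcal P_\mu\phi=(1-\delta)R\phi,\qquad R\phi(y)=\int r(x,y)\phi(x)\,d\mu(x).
\]
By Jensen (or Schur's test), using both marginal normalizations of $r$, $R$ is a contraction on $L^2(\mu)$. Hence $|\mathcal P_\mu\phi|_{L^2(\mu)}\le (1-\delta)|\phi|_{L^2(\mu)}$, and $\bar\gamma=1-\delta$ works.

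\textbf{Step 4: regularization \eqref{regularization}.} Differentiate under the integral:
\[
(\mathcal P_\mu\phi)'(y)=\int \partial_y q(x,y)\,\phi(x)\,d\mu(x).
\]
This is justified by dominated convergence and the $C^1$ bound, and it gives a genuine derivative. We have
\[
|\partial_y q|\le \frac{c_2}{c_1}+\frac{\|h\|_\infty c_2}{c_1^2}=:K .
\]
By Cauchy--Schwarz, $|(\mathcal P_\mu\phi)'(y)|\le K|\phi|_{L^1(\mu)}\le K|\phi|_{L^2(\mu)}$ pointwise. Integrating, $\|(\mathcal P_\mu\phi)'\|_{L^2(\mu)}\le K|\phi|_{L^2(\mu)}$, so $C_r=K$.

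\textbf{Expected obstacles.} The main technical care is in Step 2. One must obtain existence of $\rho$ and its $C^1$ regularity. Differentiability of $\rho$ follows from $\rho=L\rho$ with a $C^1$ kernel in $y$, since $f$ only enters the $x$ variable. Everything else is elementary once the kernel $q$ is uniformly bounded below and $C^1$ in $y$.
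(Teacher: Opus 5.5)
Your proposal is correct and follows essentially the same route as the paper: you use the same $\mu$-doubly-stochastic kernel $h(y-f(x))/\rho(y)$, split it as a constant plus $(1-\delta)$ times a nonnegative doubly stochastic kernel and apply Jensen to get \eqref{one_step}, and differentiate under the integral with the bounds $c_1\le\rho$, $|\rho'|\le c_2$ to get \eqref{regularization}. The differences are cosmetic: you obtain $\mu$ from Doeblin's minorization instead of quoting an external existence/uniqueness theorem, and you take $\delta=c_1/\|h\|_\infty$ where the paper takes $c_1/(1+c_2)$.
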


As a result, Theorem \ref{thm1} and Theorem \ref{thm2} can be applied when the underlying one-dimensional chain satisfies Hypothesis  \ref{C}.

\section{ANOVA}\label{Section3}
The aim of this section is to leverage an orthogonal decomposition of $L^2(\mu_N)$, called ANOVA, to obtain dimension-free contraction rates for the $N$-fold product $\mathcal{P}_{\mu,N}$ (see \eqref{uncoupledop}) of a transfer operator satisfying \eqref{geometricergodicity}. More precisely, we first establish that each component of ANOVA is indexed by a specific set $u\subset \{1,\dots,N\}$, and then show that the $N$-fold product of an operator contracting the $L^2$ norm of $\mu$-mean-zero functions, with rate $\kappa$, contracts each ANOVA subspace with rate $\kappa^{|u|}$. 

Given $u \subset \{1,\dots, N\}$ and $x \in \mathbb{T}^N$, let $x_u = (x_j)_{j \in u}$ denote the projection of $x$ onto the coordinates indexed by $u$. If $u = \emptyset$, let
\begin{align*}
S_{\emptyset} = \text{span}(1),
\end{align*}
otherwise, define $S_u \subset L^2(\mu_N)$  as the subspace of functions,  depending only on the coordinates in $u$, and that are  $\mu$-mean zero on each of these coordinates:
\begin{equation}\label{esseu}
S_u:=\left\{\phi=\phi(x_u)\in L^2(\mu_N)\colon  \int \phi(x_u)\,d\mu(x_j)=0
\ \text{for $\mu_{|u|-1}$-a.e. }x_{u\setminus\{j\}},\ \forall j\in u\right\}.
\end{equation}
Take  $\phi \in L^2(\mu_N)$,  and  define the functions $(\phi_u)_{u \subseteq \{1,\dots,N\}}$ recursively as follows. For the empty set, define
\begin{equation}\label{mean-zero}
\phi_{\emptyset} := \int_{\mathbb{T}^N} \phi(x)\, d\mu_N(x).
\end{equation}
If $|u| \ge 1$
\begin{equation}\label{more-coord}
\phi_u(x_u)
=
\int_{\mathbb{T}^{ N-|u|}} \phi(x_u,x_{-u})\, d\mu_{N-|u|}(x_{-u})
-
\sum_{v \subsetneq u} \phi_v(x_v),
\end{equation}
where $x_{-u} = (x_j)_{j \notin u}$. The following proposition collects standard results. For Lebesgue reference measure, a proof is given in \cite[Appendix A]{owen2013montecarlo}, and the argument extends verbatim to the present product probability space
\begin{proposicao}\label{ANOVAprop}
The subspaces $S_u$ form an orthogonal decomposition of $L^2(\mu_N)$: 
\begin{align*}
L^2(\mu_N) = \bigoplus_{u \subset \{1,\dots,N\}} S_u.
\end{align*}
Furthermore, for all $u \subset \{1,\dots,N\}$, the function $\phi_u$ in \eqref{mean-zero}-\eqref{more-coord}  belongs to $S_u$ and  \begin{equation}\label{represntation}
\phi(x) = \sum_{u \subseteq \{1,\dots,N\}} \phi_u(x_u).
\end{equation}
In particular, the Parseval identity holds, namely
\begin{equation}\label{parseval}
||\phi||^2_{L^2(\mu_N)} = \sum_{u \subseteq \{1,\dots,N\}} ||\phi_u||^2_{L^2(\mu_N)}.
\end{equation}
\end{proposicao}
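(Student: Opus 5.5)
We will prove Proposition \ref{ANOVAprop} in three steps: show that each $\phi_u$ lies in $S_u$, show that the $S_u$ are mutually orthogonal, and then read off completeness and Parseval from the representation \eqref{represntation}.

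\textbf{Conditional expectations.} For $u\subset\{1,\dots,N\}$, write
\[
E_u\phi(x_u):=\int \phi(x_u,x_{-u})\,d\mu_{N-|u|}(x_{-u}).
\]
By Fubini, $E_u$ is the conditional expectation onto the $\sigma$-algebra generated by $x_u$. Hence it is an orthogonal projection on $L^2(\mu_N)$, and the projections commute:
\[
E_uE_v=E_{u\cap v}.
\]
With this notation, definition \eqref{more-coord} reads $E_u\phi=\sum_{v\subseteq u}\phi_v$. Möbius inversion on the Boolean lattice then gives the closed formula
\[
\phi_u=\sum_{v\subseteq u}(-1)^{|u\setminus v|}E_v\phi.
\]
We prove this by induction on $|u|$, using the identity $\sum_{v\subseteq w\subseteq u}(-1)^{|u\setminus w|}=\mathbf 1_{v=u}$.

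\textbf{Membership in $S_u$.} The function $\phi_u$ clearly depends only on $x_u$. Fix $j\in u$. Integrating in $x_j$ is the operator $E_{u\setminus\{j\}}$ restricted to functions of $x_u$, and $E_{u\setminus\{j\}}E_v=E_{v\setminus\{j\}}$. So we pair each $v\not\ni j$ with $v\cup\{j\}$ in the Möbius sum. Both terms of a pair become $E_v\phi$ after integrating in $x_j$, and they carry opposite signs, so they cancel. Therefore $\int\phi_u\,d\mu(x_j)=0$ almost everywhere, and $\phi_u\in S_u$. Taking $u=\{1,\dots,N\}$, where $E_u\phi=\phi$, gives the representation \eqref{represntation}.

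\textbf{Orthogonality.} Let $u\neq v$, say $j\in u\setminus v$, and take $f\in S_u$, $g\in S_v$. Then $g$ does not depend on $x_j$. By Fubini,
\[
\int fg\,d\mu_N=\int g\Bigl(\int f\,d\mu(x_j)\Bigr)d\mu_{N-1}=0.
\]
For $v=\emptyset$ the same argument applies, since $g$ is constant.

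\textbf{Conclusion.} The $S_u$ are closed subspaces: each is an intersection of the range of $E_u$ with kernels of the bounded operators $E_{u\setminus\{j\}}$. They are mutually orthogonal, and every $\phi$ is the finite sum $\sum_u\phi_u$ with $\phi_u\in S_u$. Hence $L^2(\mu_N)=\bigoplus_u S_u$. Parseval \eqref{parseval} follows by expanding $\|\sum_u\phi_u\|^2$ and using orthogonality.

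The only real technical point is the first step, namely justifying the Möbius inversion and the commutation $E_uE_v=E_{u\cap v}$. Both follow from Fubini on the product measure. Everything else is bookkeeping.
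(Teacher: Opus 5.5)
Your proof is correct and is essentially the standard argument the paper cites rather than proves (\cite[Appendix A]{owen2013montecarlo}): show that each $\phi_u$ has vanishing one-dimensional marginals in its active coordinates, get orthogonality of $S_u$ and $S_v$ by integrating first in a coordinate lying in only one of $u,v$, and read off completeness and Parseval from the finite representation $\phi=\sum_u\phi_u$. The only difference is in how you prove the vanishing marginals: you use the M\"obius closed form $\phi_u=\sum_{v\subseteq u}(-1)^{|u\setminus v|}E_v\phi$, the identity $E_uE_v=E_{u\cap v}$ and the pairing $v\leftrightarrow v\cup\{j\}$, whereas the standard proof inducts on $|u|$ directly from the recursion \eqref{more-coord}.
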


The following lemma shows that, if a transfer operator associated with a Markov chain with stationary measure $\mu$ contracts the $L^2(\mu)$ norm of $\mu$-mean-zero functions at rate $\kappa$, then its $N$-fold product contracts the ANOVA subspace $S_u$ at rate $\kappa^{|u|}$.
 
Recall that, given a linear operator $\mathcal{Q}_{\mu} \colon L^2(\mu) \to L^2(\mu)$, its $N$-fold product $\mathcal{Q}_{\mu,N}$ is the unique bounded linear operator satisfying $$\mathcal{Q}_{\mu,N}\left(\otimes_{i=1}^N \phi_i\right) = \otimes_{i=1}^N\mathcal{Q}_{\mu}(\phi_i),\qquad \forall \phi_1,\dots, \phi_N \in L^2(\mu).$$
\begin{lema}\label{preserbationANOVA}
Let 
$\mathcal{Q}_{\mu} \colon L^2(\mu) \to L^2(\mu)$ be an operator satisfying $\mathcal{Q}_{\mu}(1) = 1$ and
\begin{equation}\label{pres}
\int Q_{\mu}(\phi) d\mu = \int \phi d\mu \qquad \forall \phi \in L^2(\mu)
\end{equation}
\begin{equation}\label{contrhelp}
||\mathcal{Q}_{\mu}(\phi) ||_{L^2(\mu)} \le \kappa ||\phi||_{L^2(\mu)},\qquad \forall \phi \in L^2(\mu)\colon \int \phi d\mu =0,
\end{equation}
 for some $\kappa \in (0,1]$. Then, for all  $u \subset \{1,\dots,N\}$, the $N$-fold operator $\mathcal{Q}_{\mu,N}$ satisfies
\begin{equation}\label{preservance}
\mathcal{Q}_{\mu,N}(S_u) \subset S_u
\end{equation}
and
\begin{equation}\label{conraction}
||\mathcal{Q}_{\mu,N}(\phi)  ||_{L^2(\mu_N)} \le \kappa^{|u|}||\phi  ||_{L^2(\mu_N)},\qquad \forall \phi \in S_u
\end{equation}
\end{lema}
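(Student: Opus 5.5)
The plan is to factor $\mathcal{Q}_{\mu,N}$ as a product of commuting one-coordinate operators and to handle each coordinate separately. For $j\in\{1,\dots,N\}$, let $\mathcal{Q}^{(j)}$ denote the operator that applies $\mathcal{Q}_\mu$ in the variable $x_j$ and the identity in the remaining variables. Formally, $\mathcal{Q}^{(j)}=I\otimes\cdots\otimes\mathcal{Q}_\mu\otimes\cdots\otimes I$ on $L^2(\mu_N)\cong L^2(\mu)^{\otimes N}$. These operators commute, and
\[
\mathcal{Q}_{\mu,N}=\mathcal{Q}^{(1)}\cdots\mathcal{Q}^{(N)},
\]
since both sides agree on elementary tensors $\otimes_i\phi_i$ and are bounded. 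Boundedness of $\mathcal{Q}^{(j)}$ on $L^2(\mu_N)$ holds because the Hilbert tensor product of a bounded operator with identities is bounded with the same norm. Concretely, $(\mathcal{Q}^{(j)}\phi)(x)=\mathcal{Q}_\mu[\phi(\cdot,x_{-j})](x_j)$ for $\mu_{N-1}$-a.e. $x_{-j}$, which follows by density of finite sums of tensors.

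\emph{Preservation.} Take $\phi\in S_u$. If $j\notin u$, then $\phi$ does not depend on $x_j$, so the section $\phi(\cdot,x_{-j})$ is constant. Since $\mathcal{Q}_\mu(1)=1$, we get $\mathcal{Q}^{(j)}\phi=\phi$. If $j\in u$, then for a.e. $x_{-j}$ the section $g=\phi(\cdot,x_{-j})$ has zero $\mu$-mean, and $\mathcal{Q}^{(j)}\phi$ still depends only on $x_u$. We must also check that the mean-zero conditions survive. In coordinate $j$, this follows from \eqref{pres}: $\int\mathcal{Q}_\mu g\,d\mu=\int g\,d\mu=0$. In a coordinate $k\in u\setminus\{j\}$, integrating in $x_k$ commutes with $\mathcal{Q}^{(j)}$. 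This is the identity $(\mathcal{Q}_\mu\otimes \int d\mu)=\int d\mu\circ(\mathcal{Q}_\mu\otimes I)$ on the two-variable space, which is verified on tensors and extended by continuity. Hence $\int(\mathcal{Q}^{(j)}\phi)\,d\mu(x_k)=\mathcal{Q}^{(j)}\bigl(\int\phi\,d\mu(x_k)\bigr)=0$. So each $\mathcal{Q}^{(j)}$ maps $S_u$ into $S_u$, and therefore so does their product, which gives \eqref{preservance}.

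\emph{Contraction.} For $j\in u$ and $\phi\in S_u$, I would apply Fubini and \eqref{contrhelp} section by section:
\[
\|\mathcal{Q}^{(j)}\phi\|_{L^2(\mu_N)}^2=\int\|\mathcal{Q}_\mu[\phi(\cdot,x_{-j})]\|_{L^2(\mu)}^2\,d\mu_{N-1}(x_{-j})\le\kappa^2\|\phi\|^2_{L^2(\mu_N)}.
\]
This uses that almost every section has mean zero, which holds because $\phi\in S_u$. By preservation, after applying some of the $\mathcal{Q}^{(j)}$ the function is still in $S_u$, so the bound can be iterated over each $j\in u$. The factors with $j\notin u$ act as the identity. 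This yields the factor $\kappa^{|u|}$, which is \eqref{conraction}.

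The main obstacle is the measure-theoretic justification that $\mathcal{Q}^{(j)}$ acts fibrewise on a.e. section, and that partial integration commutes with it. A general $\mathcal{Q}_\mu$ is only a bounded operator on $L^2(\mu)$, not necessarily given by a kernel. I would handle this by working in $L^2(\mu_N)\cong L^2(\mu_{N-1};L^2(\mu))$, the Bochner space. There $\mathcal{Q}^{(j)}$ is the pointwise application of $\mathcal{Q}_\mu$, and the fibrewise formula together with the norm computation above becomes immediate.
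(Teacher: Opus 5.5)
Your proposal is correct and follows essentially the same route as the paper. You factor $\mathcal{Q}_{\mu,N}$ into the commuting one-coordinate operators, show each one preserves $S_u$ (the identity for $j\notin u$, Fubini and \eqref{pres} for $j\in u$), and then apply \eqref{contrhelp} fibrewise once for each $j\in u$; your identification $L^2(\mu_N)\cong L^2(\mu_{N-1};L^2(\mu))$ just makes rigorous the fibrewise Fubini step that the paper states without further justification.
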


\begin{proof}
For each $j\in{1,\ldots,N}$, let
$$
\mathcal Q_{j}
:=
I^{\otimes(j-1)}
\otimes\mathcal Q_{\mu}
\otimes I^{\otimes(N-j)}
$$
denote the operator acting as $\mathcal Q_{\mu}$ on the $j$-th coordinate and as the identity on all the remaining coordinates. Then
\begin{equation}\label{tensordec}
\mathcal Q_{\mu,N}= \mathcal Q_{1}\circ\cdots\circ\mathcal Q_{N}.    
\end{equation}

Let  $\phi\in S_u$. We show
$$
\mathcal Q_{j}\phi\in S_u, \qquad \forall j \in \{1,\dots,N\}.
$$
Indeed, if $j \notin u$, then $\phi$ does not depend on the variable $x_j$. Hence
$$
\mathcal Q_{j}\phi=\phi \mathcal Q_{j}(1)= \phi.
$$

If $j\in u$, then, if $i\in u$ and  $i\neq j$, by Fubini's theorem 
$$
\int_{\mathbb{T}}
\mathcal Q_{j}\phi(x_u)d\mu(x_i)
= \mathcal Q_{j}
\left(
\int_{\mathbb{T}}
\phi(x_u)d\mu(x_i)
\right)= 0.
$$
If $i=j$, by \eqref{pres}
$$
\int_{\mathbb{T}}
\mathcal Q_{j}\phi(x_u)d\mu(x_j)
= \int_{\mathbb{T}}
\phi(x_u)d\mu(x_j)
=0.
$$
Therefore,
$$
\mathcal Q_{j}(S_u)\subseteq S_u
$$
for every $j= 1,\dots,N$. Because of \eqref{tensordec}, \eqref{preservance} holds.

We now prove the contraction estimate. Fix $j\in u$ and  write $x_u = (x_{u\setminus{j}},x_j)$. By the definition of $S_u$, $\mu_{|u|-1}$-a.e. fixed
$x_{u\setminus{j}}$, the function
\begin{align*}
\phi_{x_{u\setminus j}}(x_j):=\phi(x_{u\setminus{j}},x_j)
\end{align*}
has zero $\mu$-mean. Hence, by \eqref{contrhelp},
\begin{equation}\label{fubini_trick}
\begin{aligned}
\left|
\mathcal Q_{j}\phi
\right|_{L^2(\mu_N)}^2
&=
\int_{\mathbb T^{|u|-1}}
\left|
\mathcal Q_{\mu}
\left(
\phi_{x_{u\setminus j}}
\right)
\right|_{L^2(\mu)}^2
d\mu_{|u|-1}
\left(
x_{u\setminus{j}}
\right)
\\
&\leq
\kappa^2
\int_{\mathbb T^{|u|-1}}
\left|
\phi_{x_{u\setminus j}}
\right|_{L^2(\mu)}^2
d\mu_{|u|-1}
\left(
x_{u\setminus{j}}
\right)
\\
&=
\kappa^2
|\phi|_{L^2(\mu_N)}^2.
\end{aligned}
\end{equation}
Therefore,
$$
\left|
\mathcal Q_{j}\phi
\right|_{L^2(\mu_N)}
\leq
\kappa
|\phi|_{L^2(\mu_N)}.
$$

Since each $\mathcal Q_{j}$ preserves $S_u$, we may apply the above estimate successively to all the coordinates $j\in u$, which proves \eqref{conraction}.
\end{proof}
We conclude this section with the dimension-free contraction rate for the $N$-fold operator  associated with a one-dimensional mixing chain as in Hypothesis \ref{A} . 

\begin{proposicao}\label{onedimrate}
Let $\{X_n\}_{n \ge 0}$ be a Markov chain satisfying Hypothesis \ref{A}, with stationary measure $\mu$ and transfer operator $\mathcal{P}_{\mu}$, and let $\mathcal{P}_{\mu,N}$ denote the $N$-fold operator associated with $\mathcal{P}_{\mu}$. Then, for the same $C,\gamma$ as in \eqref{geometricergodicity}
\begin{equation}\label{dimfree}
||\mathcal{P}_{\mu,N}^n(\phi) ||_{L^2(\mu_N)} \le C \gamma^n || \phi||_{L^2(\mu_N)},\qquad \forall \phi \in L^2(\mu_N)\colon \int \phi d\mu_N=0.
\end{equation}
\end{proposicao}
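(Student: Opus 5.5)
Since $\int\phi\,d\mu_N=0$, the component $\phi_\emptyset$ in \eqref{mean-zero} vanishes. Proposition \ref{ANOVAprop} then gives the orthogonal expansion $\phi=\sum_{u\neq\emptyset}\phi_u$ with $\phi_u\in S_u$. The plan is to apply Lemma \ref{preserbationANOVA} not to $\mathcal P_\mu$ itself but to its power $\mathcal Q_\mu:=\mathcal P_\mu^n$, and then reassemble the pieces with Parseval.

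\textbf{Step 1: $\mathcal P_\mu^n$ satisfies the hypotheses of Lemma \ref{preserbationANOVA}.} We need $\mathcal P_\mu^n(1)=1$, which holds because $\mu$ is stationary. We need $\int \mathcal P_\mu^n\phi\,d\mu=\int\phi\,d\mu$, which holds because $\bar p$ preserves total mass. Finally, \eqref{contrhelp} must hold with constant $\kappa_n:=\min(1,C\gamma^n)\in(0,1]$. The bound $C\gamma^n$ is exactly \eqref{geometricergodicity} with $k=n$. The bound $1$ follows from \eqref{usedlater} with $N=1$, $p=2$. (If $C<1$ we can also take $\kappa_n = C\gamma^n$ directly.)

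\textbf{Step 2: the $N$-fold power splits along the ANOVA subspaces.} The $N$-fold product of $\mathcal P_\mu^n$ equals $\mathcal P_{\mu,N}^n$; this is immediate on elementary tensors by \eqref{uncoupledop}, and extends by linearity and density. Lemma \ref{preserbationANOVA} then gives, for each $u$, that $\mathcal P_{\mu,N}^n\phi_u\in S_u$ and
\[
\|\mathcal P_{\mu,N}^n\phi_u\|_{L^2(\mu_N)}\le\kappa_n^{|u|}\|\phi_u\|_{L^2(\mu_N)}\le\kappa_n\|\phi_u\|_{L^2(\mu_N)},
\]
where the last inequality uses $|u|\ge1$ and $\kappa_n\le1$.

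\textbf{Step 3: reassemble with Parseval.} The images $\mathcal P_{\mu,N}^n\phi_u$ lie in mutually orthogonal subspaces $S_u$. By boundedness, $\mathcal P_{\mu,N}^n\phi=\sum_u\mathcal P_{\mu,N}^n\phi_u$, and this sum converges since there are only finitely many $u$. Parseval \eqref{parseval} therefore gives
\[
\|\mathcal P_{\mu,N}^n\phi\|^2_{L^2(\mu_N)}=\sum_{u\ne\emptyset}\|\mathcal P_{\mu,N}^n\phi_u\|^2_{L^2(\mu_N)}\le\kappa_n^2\sum_{u\neq\emptyset}\|\phi_u\|^2_{L^2(\mu_N)}=\kappa_n^2\|\phi\|^2_{L^2(\mu_N)},
\]
and $\kappa_n\le C\gamma^n$ yields \eqref{dimfree}.

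\textbf{Main obstacle.} The only subtle point is Step 1. Lemma \ref{preserbationANOVA} requires $\kappa\le1$, while $C$ in \eqref{geometricergodicity} may exceed $1$, so $C\gamma^n>1$ for small $n$. This is handled by capping $\kappa_n$ at $1$ via the $L^2$ contractivity \eqref{usedlater}. Nothing in the argument depends on $N$, because orthogonality lets the constant be taken uniformly over the components.
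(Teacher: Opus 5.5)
Your proposal is correct and follows essentially the same route as the paper: it applies Lemma \ref{preserbationANOVA} to $\mathcal P_\mu^n$ with $\kappa_n=\min\{1,C\gamma^n\}$, capped at $1$ via \eqref{usedlater}, and then reassembles the nonempty ANOVA components using Parseval \eqref{parseval}. The only difference is that you explicitly verify the lemma's hypotheses for $\mathcal P_\mu^n$ and the bound $\kappa_n^{|u|}\le\kappa_n$, which the paper leaves implicit.
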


\begin{proof}
Let $C>0,\gamma \in (0,1)$ as \eqref{geometricergodicity}. Due to \eqref{usedlater}, for all $n \ge 0$
\begin{align*}
||\mathcal{P}^n_{\mu}(\phi) ||_{L^2(\mu)} \le \min\{1,C\gamma^n\}|| \phi||_{L^2(\mu)},\qquad \forall \phi \in L^2(\mu)\colon \int \phi d\mu=0.
\end{align*}

For $n \in \mathbb N$, let $\kappa_n:=\min\{1,C\gamma^n\}$ and   $\mathcal{R}_n := \mathcal{P}_{\mu}^n$. Furthermore, let $R_{n,N}$ denote the $N$-fold product of $R_n$, i.e. $R_{n,N}=P_{\mu,N}^n$.  If $\phi \in L^2(\mu_N)$ has $\mu_N$-zero mean, due to \eqref{mean-zero} and \eqref{represntation},  the ANOVA decomposition reads
\begin{align*}
\phi = \sum_{\emptyset \neq u \subset \{1,\dots,N\}} \phi_u,
\end{align*}
with $\phi_u$ as in \eqref{more-coord}.
By linearity
\begin{align*}
\mathcal{R}_{n,N}(\phi) = \sum_{\emptyset \neq u \subset \{1,\dots,N\}} \mathcal{R}_{n,N}(\phi_u).    
\end{align*}
By the Parseval identity in \eqref{parseval}, and \eqref{conraction} with $\mathcal{Q}_{\mu}=\mathcal R_n$, we get 
\begin{align*}
||\mathcal R_{n,N}(\phi)||^2_{L^2(\mu_N)} &= \sum_{\emptyset \neq u \subset \{1,\dots,N\}} ||\mathcal R_{n,N}(\phi_u)||^2_{L^2(\mu_N)} \\
&\le 
\sum_{\emptyset \neq u \subset \{1,\dots,N\}} \left(\kappa_n\right)^{2|u|}||(\phi_u)||^2_{L^2(\mu_N)}\\
&\le \min\{C\gamma^n,1\}^2||\phi||^2_{L^2(\mu_N)},
\end{align*}
where in the last inequality we used again \eqref{parseval}.
\end{proof}

\section{Proof of Theorem \ref{thm1}}\label{Section4}
Let  $\{H_N\}_{N \ge 0}$ be a sequence of holes such that $\mu_N(H_N) \to 0$ as $N\to \infty$. To prove Theorem \ref{thm1}, we compare, for $N$ large enough, the spectra of $\mathcal{P}_{\mu,N}$ and $\mathcal{P}_{\mu,H_N}$ by applying the stochastic stability result, established in Proposition \ref{Kl}, which is a quantitative version of the classical Keller-Liverani framework \cite{KellerLiverani1999}.

Leveraging the dimension-free rate of contraction established in Proposition \ref{onedimrate}, the following proposition checks that the operators $\mathcal{P}_{\mu,N}$ and $\mathcal{P}_{\mu,H_N}$ satisfy the assumptions \eqref{continuousKL}-\eqref{lasotayorkeKL} requested to apply the stability result in Proposition \ref{Kl}.

\begin{proposicao}\label{checklive}
Let $\{X_n\}_{n \ge 0}$ be a one-dimensional Markov chain satisfying Hypothesis \ref{A}, with its associated stationary measure $\mu$ and transfer operator $\mathcal{P}_{\mu}$.  Let $\mathcal{P}_{\mu,N}$ be the $N$-fold operator associated with $\mathcal{P}_{\mu}$ as in \eqref{uncoupledop} and, given a sequence of holes $\{H_N\}_{N \ge 0}$ satisfying \eqref{holesdecay}, let $\mathcal{P}_{\mu,H_N}$ be the conditioned operator as in \eqref{conditioned}. For all $q \in (1,\infty)$, there exist $C_q>0,$ $\gamma_q \in(0,1)$, independent of $N$, such that, for all $n\ge 0$
\begin{equation}\label{unifcontrlp}
\left|\left|\mathcal{P}_{\mu,N}^n(\phi)-\int \phi d\mu_N \right|\right|_{L^q(\mu_N)} \le C_q \gamma_q^n||\phi||_{L^q(\mu_N)}, \qquad \forall \phi \in L^q(\mu_N).
\end{equation}
Furthermore, there exists $C_{1,q},C_{2,q}>0$, independent of $N$, such that  for all $p \in [1,q)$ and all $n\ge 0$
\begin{equation}\label{lasota}
\left|\left| \mathcal{P}_{\mu,H_N}^n(\phi)\right|\right|_{L^q(\mu_N)} \le  C_{1,q}\gamma_{q}^n||\phi ||_{L^q(\mu_N)}+C_{2,q}||\phi||_{L^p(\mu_N)},
\end{equation}
with $\gamma_q$ as in \eqref{unifcontrlp}
and
\begin{equation}\label{Kellerlivdiff}
\sup_{\left\{\phi \colon ||\phi||_{L^q(\mu_N)}=1 \right\}}||(\mathcal{P}_{\mu,N}-\mathcal{P}_{\mu,H_N})(\phi) ||_{L^p(\mu_N)} \le \mu_N(H_N)^{\frac{1}{p}-\frac{1}{q}}
\end{equation}
\end{proposicao}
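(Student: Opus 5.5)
The proof has three parts, one for each of \eqref{unifcontrlp}, \eqref{lasota} and \eqref{Kellerlivdiff}.

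\emph{Step 1: extending Proposition \ref{onedimrate} to $L^q$.} Set $\Pi\phi:=\int\phi\,d\mu_N$ and $T_n:=\mathcal P_{\mu,N}^n-\Pi$. Since $\mathcal P_{\mu,N}$ preserves integrals, $T_n\phi=\mathcal P^n_{\mu,N}(\phi-\Pi\phi)$. Proposition \ref{onedimrate} then gives
\[
\|T_n\|_{L^2\to L^2}\le C\gamma^n .
\]
By \eqref{usedlater}, for every $r\in[1,\infty]$ we also have $\|T_n\|_{L^r\to L^r}\le 2$, with a constant independent of $N$.

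Now fix $q\in(1,\infty)$ and choose $r$ on the far side of $2$: $r\in(q,\infty)$ if $q>2$, $r\in(1,q)$ if $q<2$ (for example $r=1$ or $r=\infty$, where the bound $2$ still holds). Write $\tfrac1q=\tfrac{t}{2}+\tfrac{1-t}{r}$ with $t\in(0,1]$. Riesz--Thorin interpolation gives
\[
\|T_n\|_{L^q\to L^q}\le 2^{1-t}C^t\gamma^{tn}.
\]
This yields \eqref{unifcontrlp} with $\gamma_q:=\gamma^{t}$ and $C_q:=2^{1-t}C^t$, both independent of $N$ because all the input bounds are. Riesz--Thorin is used here for complex $L^p$; it applies since the operators are real and linear.

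\emph{Step 2: the Lasota--Yorke inequality \eqref{lasota}.} Write $\mathcal P_{\mu,H_N}=\mathcal P_{\mu,N}M$, where $M\phi=1_{\mathbb T^N\setminus H_N}\phi$ is a contraction in every $L^r$.

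A direct splitting of $\mathcal P^n_{\mu,H_N}$ into $\mathcal P_{\mu,N}^n$ plus an error would produce terms involving $\Pi$. These are bounded by $\|\phi\|_{L^1}\le\|\phi\|_{L^p}$, which is harmless, but the error term itself is not obviously controlled. So I will argue in two stages instead.

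For a single block of length $m$, decompose
\[
\mathcal P_{\mu,H_N}^m\phi=\mathcal P_{\mu,N}^{m-1}\,\mathcal P_{\mu,N}M\,\mathcal P_{\mu,H_N}^{m-1}\phi
\]
and apply \eqref{unifcontrlp} to the outer $\mathcal P_{\mu,N}^{m-1}$:
\[
\|\mathcal P^m_{\mu,H_N}\phi\|_{L^q}\le C_q\gamma_q^{m-1}\|\phi\|_{L^q}+\Bigl|\int\mathcal P_{\mu,H_N}^{m-1}\phi\,d\mu_N\Bigr|\le C_q\gamma_q^{m-1}\|\phi\|_{L^q}+\|\phi\|_{L^1}.
\]
The second inequality uses the positivity and $L^1$-contractivity of $\mathcal P_{\mu,H_N}$.

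Next, choose $m$ with $C_q\gamma_q^{m-1}\le\gamma_q^{m/2}$. Iterating over blocks of length $m$, and using $\|\mathcal P_{\mu,H_N}^k\phi\|_{L^1}\le\|\phi\|_{L^1}$ together with $\|\mathcal P_{\mu,H_N}\|_{L^q}\le 1$ to handle the remainder $n \bmod m$, gives
\[
\|\mathcal P^n_{\mu,H_N}\phi\|_{L^q}\le C\,\gamma_q^{n/2}\|\phi\|_{L^q}+\tfrac{1}{1-\gamma_q^{m/2}}\|\phi\|_{L^1}.
\]
Finally, $\|\phi\|_{L^1}\le\|\phi\|_{L^p}$. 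Renaming $\gamma_q^{1/2}$ as $\gamma_q$ (this is allowed, since Step 1 holds for any larger rate, so we may declare $\gamma_q$ to be the larger value from the start) gives \eqref{lasota}. I expect this bookkeeping to be the only delicate point: the rate in \eqref{lasota} must match the one in \eqref{unifcontrlp}, and the $L^1$ term must not grow with $n$.

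\emph{Step 3: the perturbation estimate \eqref{Kellerlivdiff}.} We have $(\mathcal P_{\mu,N}-\mathcal P_{\mu,H_N})\phi=\mathcal P_{\mu,N}(1_{H_N}\phi)$. By \eqref{usedlater} and Hölder's inequality with exponents $q/p$ and its conjugate,
\[
\|1_{H_N}\phi\|_{L^p}\le\mu_N(H_N)^{\frac1p-\frac1q}\|\phi\|_{L^q}.
\]
This is exactly \eqref{Kellerlivdiff}.
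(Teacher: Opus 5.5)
Steps 1 and 3 are correct and match the paper's argument: Riesz--Thorin interpolation between the $L^2$ bound from Proposition \ref{onedimrate} and the uniform bound $2$ at $L^1$ or $L^\infty$, and H\"older applied to $1_{H_N}\phi$. The gap is in Step 2. Your starting identity $\mathcal P_{\mu,H_N}^m\phi=\mathcal P_{\mu,N}^{m-1}\mathcal P_{\mu,N}M\mathcal P_{\mu,H_N}^{m-1}\phi$ is false. Since $\mathcal P_{\mu,N}M\mathcal P_{\mu,H_N}^{m-1}=\mathcal P_{\mu,H_N}^m$, the right-hand side equals $\mathcal P_{\mu,N}^{m-1}\mathcal P_{\mu,H_N}^{m}\phi$, which applies the transfer operator $2m-1$ times instead of $m$. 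More fundamentally, $\mathcal P_{\mu,H_N}^m=(\mathcal P_{\mu,N}M)^m$ has an indicator between every pair of transfer operators. So no exact algebraic identity lets you factor out an unconditioned power $\mathcal P_{\mu,N}^{m-1}$. You can only peel off a single $\mathcal P_{\mu,N}$, and \eqref{unifcontrlp} with $n=1$ gives the factor $C_q\gamma_q$, which need not be $<1$ under Hypothesis \ref{A}. The block iteration therefore has nothing valid to iterate.

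The missing idea is positivity domination. Since $\mathcal P_{\mu,N}$ is positive and $0\le 1_{\mathbb T^N\setminus H_N}\le 1$, induction gives the pointwise bound $|\mathcal P_{\mu,H_N}^n\phi|\le \mathcal P_{\mu,H_N}^n|\phi|\le \mathcal P_{\mu,N}^n|\phi|$. Applying \eqref{unifcontrlp} to $|\phi|$ then yields
\[
\|\mathcal P_{\mu,H_N}^n\phi\|_{L^q(\mu_N)}\le \int|\phi|\,d\mu_N+C_q\gamma_q^n\|\phi\|_{L^q(\mu_N)}\le \|\phi\|_{L^p(\mu_N)}+C_q\gamma_q^n\|\phi\|_{L^q(\mu_N)}.
\]
This is \eqref{lasota} with $C_{1,q}=C_q$, $C_{2,q}=1$ and the same $\gamma_q$ as in \eqref{unifcontrlp}, which is exactly the paper's argument. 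The same domination is what makes your one-block inequality true, in fact with $\gamma_q^m$ in place of $\gamma_q^{m-1}$. Once you have it, the block iteration and the renaming of $\gamma_q^{1/2}$ as $\gamma_q$ are unnecessary.
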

\begin{proof}
First, note that \eqref{dimfree} implies \eqref{unifcontrlp} with $C_2= C$ and $\gamma_2 =\gamma$ and
\begin{align*}
\Pi_N(\phi):= \int \phi d\mu_N.
\end{align*}
By invariance of $\mu$, we have, for all $r \in [1,\infty]$
\begin{align*}
||\mathcal{P}_{\mu,N}^n -\Pi_N||_{L^r(\mu_N)} \le 2, \qquad \forall n\ge 0.
\end{align*}
The above equation, together with \eqref{dimfree} and  the Riesz--Thorin interpolation theorem
\cite{BerghLofstrom1976}, applied between $L^1$ and $L^2$ for $1<q\leq 2$,
and between $L^2$ and $L^\infty$ for $2\leq q<\infty$, yields
\eqref{unifcontrlp}, with
\[
C_q=2^{1-\alpha_q}C^{\alpha_q},
\qquad
\gamma_q= \gamma^{\alpha_q},
\qquad
\alpha_q
=2\min\left\{\frac{1}{q},1-\frac{1}{q}\right\}.
\]
To establish \eqref{lasota}, observe that, by \eqref{unifcontrlp}
\begin{align*}
||\mathcal{P}^n_{\mu,H_N}(\phi)||_{L^q(\mu_N)} &\le ||\mathcal{P}_{\mu,N}^n(|\phi|)||_{L^q(\mu_N)} \\
&\le |\phi|_{L^1(\mu_N)}+ \left|\left|\mathcal{P}_{\mu,N}^n(|\phi|)-\int |\phi| d\mu_N\right|\right|_{L^q(\mu_N)} \\
&\le |\phi|_{L^1(\mu_N)}+ C_q\gamma_q^n||\phi||_{L^q(\mu_N)}\\
&\le |\phi|_{L^p(\mu_N)}+ C_q\gamma_q^n||\phi||_{L^q(\mu_N)},
\end{align*}
for all $n \ge 0$, which implies \eqref{lasota} with $C_{1,q}=C_q,C_{2,q}=1$.

To conclude the proof of the Proposition, note that, using invariance of $\mu$, and Holder inequality, for $q>p$
\begin{align*}
||(\mathcal{P}_{\mu,N}-\mathcal{P}_{\mu,H_N})(\phi) ||_{L^p(\mu_N)}&= ||\mathcal{P}_{\mu,N}(1_{H_N}\phi)||_{L^p(\mu_N)}\\
&\le ||1_{H_N}\phi||_{L^p(\mu_N)}
\\
&\le \left(\mu_N(H_N)\right)^{\frac{1}{p}-\frac{1}{q}}||\phi||_{L^q(\mu_N)},
\end{align*}
which shows \eqref{Kellerlivdiff}.
\end{proof}

\begin{TheoremproofA}
Fix $p \in (1,\infty)$, $\varepsilon \in \left(0,\frac{1}{p}\right)$ and the sequence of holes $\{H_N\}_{N\ge 0}$ satisfying \eqref{holesdecay}. Fix  $q>p$ such that 
$\frac{1}{q}<\varepsilon$. We want to apply  the stability result Proposition \ref{Kl} with the choices  
$$\widetilde{\mathcal{B}} = L^p(\mu_N),\,\mathcal{B} = L^q(\mu_N),\, \mathcal{P}_0 =\mathcal{P}_{\mu,N},\, \mathcal{P}_{\delta}=\mathcal{P}_{\mu,H_N} ,$$
the projection operator
$$\Pi_0(\phi):= \int \phi d\mu_N,$$
$\gamma= \gamma_q$ as in \eqref{unifcontrlp},
$\gamma' =\gamma'_q \in (\gamma_q,1)$ close enough to $1$ and $\beta=\beta_q$ close enough to zero such that
\begin{equation}\label{uppabound}
\left(\frac{1}{p}-\frac{1}{q}\right)(\eta-\beta_q)> \frac{1}{p}-\varepsilon,
\end{equation}
where $\eta$ is defined in \eqref{etaKL}. This choice is possible as $\eta \to 1$, when $\gamma' \to 1$. Note that also, as stated in Proposition \ref{Kl}, $\beta_q$ must satisfy $0<\beta_q<\frac{\eta}{2}.$

In order to do that, we show that, with the  choices of $\widetilde{\mathcal{B}},\mathcal{B},\mathcal{P}_0,\mathcal{P}_{\delta}$ and $\Pi_0$ as above,  all the estimates \eqref{continuousKL}-\eqref{nonespansionKl} necessary to apply Proposition \ref{Kl} are satisfied, with constants independent of $N$. 
Indeed, \eqref{continuousKL} follows from Holder-inequality,
\eqref{mixingKL}, follows from \eqref{unifcontrlp} with $C=C_q,\gamma=\gamma_q$, \eqref{lasotayorkeKL} follows from \eqref{lasota} with $C_1,C_2 = C_{1,q},C_{2,q}$ and $\gamma_1=\gamma_{q}$, and \eqref{nonespansionKl} follows from \eqref{usedlater}. 
Furthermore, observe that by \eqref{Kellerlivdiff}, \eqref{perturbationKL} is satisfied with $$\delta=\delta_N:= \left(\mu_N(H_N)\right)^{\frac{1}{p}-\frac{1}{q}}$$.

Because of \eqref{holesdecay}, by Proposition \ref{Kl}, there exist $N_{1,p,\varepsilon}\in \mathbb{N},C_{p,\varepsilon}>0$, depending only on $p$, the constants in \eqref{continuousKL}-\eqref{lasotayorkeKL} and the value of $q=q(p,\varepsilon)$ chosen at the beginning of the proof, such that, if $N\ge N_{1,p,\varepsilon}$ 
\begin{equation}\label{spectrumLq}
\sigma_{L^q(\mu_N)}(\mathcal{P}_{\mu,H_N}) \subset B_{\gamma'_q}(0) \cup B_{\delta_N^{\beta_q}}(1),
\end{equation}
and there exists a unique simple eigenvalue $\lambda_{N,q}\in B_{\delta_N^{\beta_q}}(1)$, which is the spectral radius of  $\mathcal{P}_{\mu,H_N}$ in $L^q(\mu_N)$. Since $\mathcal{P}_{\mu,H_N}$ is a positive operator, by \cite[Lemma 4.8]{daners2016eventually} the  normalized eigenfunction $\psi_{N,q} \in L^q(\mu_N)$ is a non-negative function and satisfies
\begin{equation}\label{qsmhelp}
||\psi_{N,q}-1||_{L^p(\mu_N)} \le C_{p,\varepsilon}\left(\mu_N(H_N)\right)^{\frac{1}{p}-\varepsilon}.
\end{equation}
Furthermore, since
\begin{align*}
1-\lambda_{N,q} = \int_{H_N} \psi_{N,q}d\mu_N,
\end{align*}
then, by Holder-inequality and \eqref{qsmhelp}, there exists $\bar{C}_{p,\varepsilon}>0$ such that

\begin{equation}\label{roshelp}
|\lambda_{N,q}-1| \le \bar C_{p,\varepsilon}\left(\mu_N(H_N)\right)^{1-\varepsilon}.
\end{equation}
Note that, since $p<q$, then $\lambda_{N,q}$ is an eigenvalue also for $\mathcal{P}_{\mu,H_N}$ in $L^p(\mu_N)$,  and \eqref{qsm} follows from \eqref{qsmhelp}. 

Another application of Proposition \ref{Kl} with 
$$
\widetilde{\mathcal B} = L^r(\mu_N),\,\mathcal{B} = L^p(\mu_N),
$$
with some $1<r<p$, gives values $\gamma'_p>\gamma_p$, $c_p>0, N_{2,p} \in \mathbb N$ such that, if $N\ge N_{2,p}$, then 
\begin{equation}\label{Halfspectrum}
\sigma_{L^p(\mu_N)}(\mathcal{P}_{\mu,H_N}) \subset B_{\gamma'_p}(0) \cup B_{\left(\mu_N(H_N)\right)^{c_p\left(\frac{1}{r}-\frac{1}{p}\right)}}(1),
\end{equation}
and that there exists a unique simple  eigenvalue $\lambda_{N,p} \in B_{\left(\mu_N(H_N)\right)^{c_p\left(\frac{1}{r}-\frac{1}{p}\right)}}(1)$. By uniqueness, then $\lambda_{N,p}=\lambda_{N,q}$ and \eqref{spectrum} follows from \eqref{roshelp},\eqref{Halfspectrum}, whenever $N\ge N_{p,\varepsilon}:=\max\{N_{1,p,\varepsilon},N_{2,p}\}$
\end{TheoremproofA}

\section{Proof of Theorem \ref{thm2}}\label{Section5}
Let $\{X_n\}_{n \ge 0}$ be a one-dimensional Markov chain satisfying Hypothesis \ref{B}, with stationary measure $\mu$ and transfer operator $\mathcal{P}_{\mu}$. Due to the one-step contraction for $\mathcal{P}_{\mu}$ in \eqref{one_step}  and the tensorization estimate in  Proposition \ref{onedimrate}, for all
\(\phi \in L^2(\mu_N)\) satisfying \(\int \phi \,d\mu_N=0\), one has
\begin{equation}
\label{eq:tensorized-contraction}
\|\mathcal P_{\mu,N}(\phi)\|_{L^2(\mu_N)}
\leq
\bar \gamma\|\phi\|_{L^2(\mu_N)}.
\end{equation}
The above fact will be used several times in the proof of Theorem \ref{thm2}.

Given a sequence of holes $\{H_N\}_{N\ge 0}$ satisfying \eqref{holesdecay}, we consider the conditioned normalized operator 
\begin{equation}\label{condnormal}
\bar{\mathcal{P}}_{\mu,H_N}(\phi) = \frac{\mathcal{P}_{\mu,N}(\phi1_{\mathbb{T}^N\setminus H_N})(x)}{||\phi 1_{\mathbb{T}^N\setminus H_N}||_{L^1(\mu_N)}} = \frac{\mathcal{P}_{\mu,H_N}(\phi)(x)}{||\phi 1_{\mathbb{T}^N\setminus H_N}||_{L^1(\mu_N)}},
\end{equation}
where $\mathcal{P}_{\mu,N}$ is the $N$-fold operator as in \eqref{uncoupledop} and $\mathcal{P}_{\mu,H_N}$ is the conditioned operator  in \eqref{conditioned}. It is easy to see that non-negative fixed points of $\bar{\mathcal{P}}_{\mu,H_N}$ are non-negative eigenfunctions for the conditioned operator $\mathcal{P}_{\mu,H_N}$.
Given $\bar{\gamma}$ as in \eqref{one_step}, define, for all $N \in \mathbb{N}$ 
\begin{equation}\label{deltaenne}
\delta_{N}:= \frac{4(\bar \gamma)}{1-\bar{\gamma}} \sqrt{\mu_N(H_N)},
\end{equation}
and the following  neighborhood of $1$ in $L^2(\mu_N)$
\begin{equation}\label{bienne}
B_{N}
:=
\left\{
\phi \in L^2(\mu_N)
\,:\,
\phi\ge 0,\,
\int \phi\,d\mu_N=1,\,
\|\phi-1\|_{L^2(\mu_N)}\le \delta_{N}
\right\}.
\end{equation}
The next proposition shows that, for $N$ large enough, $\bar {\mathcal{P}}_{\mu,H_N}$ has  a unique  fixed point $\psi_N$ in $B_N$. 
\begin{proposicao}
\label{inv_ne}
Let $\{X_n\}_{n \ge 0}$ be a one-dimensional Markov chain satisfying Hypothesis \ref{B}, with stationary measure $\mu$ and transfer operator $\mathcal{P}_{\mu}.$ 
Given a sequence of holes $\{H_N\}_{N\ge 0}$ satisfying \eqref{holesdecay},  there exists $N_0$ such that for any $N \ge N_0$
\begin{equation}\label{invariance}
\bar{\mathcal{P}}_{\mu,H_N}(B_N) \subset B_N,
\end{equation}
where $\bar{\mathcal{P}}_{\mu,H_N},B_N$  are as in \eqref{condnormal} and \eqref{bienne}, and there exists $\tilde{\gamma} \in (0,1)$ such that, if $\phi_1,\phi_2 \in B_N$,
\begin{equation}\label{strongcontr}
|\bar{\mathcal{P}}_{\mu,H_N}(\phi_1)-\bar{\mathcal{P}}_{\mu,H_N}(\phi_2)|_{L^2(\mu_N)} \le \tilde \gamma |\phi_1-\phi_2|_{{L^2(\mu_N)}}
\end{equation}
In particular, $\bar{\mathcal{P}}_{\mu,H_N}$  has a unique fixed point $\psi_N$ in $B_N$.
\end{proposicao}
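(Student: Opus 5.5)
The plan is to prove both claims with the one-step contraction \eqref{eq:tensorized-contraction}. The key observation is that every difference we need to estimate can be written as $\mathcal P_{\mu,N}$ applied to a $\mu_N$-mean-zero function. The fixed point then follows from the Banach contraction principle: $B_N$ is a closed subset of $L^2(\mu_N)$, since nonnegativity, the mean constraint and the ball constraint are all closed conditions. Throughout, write $H=H_N$, $\mu=\mu_N(H_N)$ for brevity, and for $\phi\in B_N$ set $h:=\phi-1$ and $m_\phi:=\|\phi 1_{\mathbb T^N\setminus H}\|_{L^1(\mu_N)}=\int \phi 1_{\mathbb T^N\setminus H}\,d\mu_N$.

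\emph{Step 1: the normalization is close to $1$.} Since $1-m_\phi=\int_H\phi\,d\mu_N=\mu+\int_H h\,d\mu_N$, Cauchy--Schwarz gives
\begin{equation*}
|1-m_\phi|\le \mu+\sqrt{\mu}\,\delta_N .
\end{equation*}
By \eqref{holesdecay} and \eqref{deltaenne}, this tends to $0$ uniformly on $B_N$. Hence for $N\ge N_0$ we have $m_\phi\ge \max\{\tfrac12,\ 1-\tfrac{1-\bar\gamma}{4}\}$ for every $\phi\in B_N$. In particular $\bar{\mathcal P}_{\mu,H_N}$ is well defined on $B_N$. Its values are nonnegative by positivity of $\mathcal P_{\mu,N}$, and they have $\mu_N$-mean $1$ by invariance of $\mu_N$.

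\emph{Step 2: invariance \eqref{invariance}.} Since $\mathcal P_{\mu,N}$ preserves integrals,
\begin{equation*}
\bar{\mathcal P}_{\mu,H_N}(\phi)-1=\frac{1}{m_\phi}\,\mathcal P_{\mu,N}\big(\phi 1_{\mathbb T^N\setminus H}-m_\phi\big),
\end{equation*}
and the argument of $\mathcal P_{\mu,N}$ has mean zero. The step I expect to need care is bounding its $L^2$ norm. Splitting along $H$ introduces $\|h1_H\|_{L^2}$, which is only $O(\delta_N)$ and not small. Instead I will split as $\phi1_{\mathbb T^N\setminus H}=1_{\mathbb T^N\setminus H}+h1_{\mathbb T^N\setminus H}$ and centre each piece separately. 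Centring does not increase the $L^2$ norm, so the first piece contributes at most $\sqrt{\mu(1-\mu)}\le\sqrt{\mu}$ and the second at most $\|h\|_{L^2}\le\delta_N$. Together with \eqref{eq:tensorized-contraction} this gives
\begin{equation*}
\|\bar{\mathcal P}_{\mu,H_N}(\phi)-1\|_{L^2(\mu_N)}\le \frac{\bar\gamma(\sqrt{\mu}+\delta_N)}{m_\phi}.
\end{equation*}
The right-hand side is $\le\delta_N$ if and only if $\bar\gamma\sqrt\mu\le\delta_N(m_\phi-\bar\gamma)$. With $\delta_N=\frac{4\bar\gamma}{1-\bar\gamma}\sqrt\mu$, this reduces to $m_\phi-\bar\gamma\ge\frac{1-\bar\gamma}{4}$, which Step 1 guarantees.

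\emph{Step 3: contraction \eqref{strongcontr}.} For $\phi_1,\phi_2\in B_N$, put $\psi_i:=\phi_i1_{\mathbb T^N\setminus H}/m_{\phi_i}$. Each $\psi_i$ has mean $1$, so $\psi_1-\psi_2$ has mean zero, and \eqref{eq:tensorized-contraction} gives
\begin{equation*}
\|\bar{\mathcal P}_{\mu,H_N}\phi_1-\bar{\mathcal P}_{\mu,H_N}\phi_2\|_{L^2}\le\bar\gamma\|\psi_1-\psi_2\|_{L^2}.
\end{equation*}
Next I will bound $\|\psi_1-\psi_2\|_{L^2}$ by
\begin{equation*}
\frac{\|\phi_1-\phi_2\|_{L^2}}{m_{\phi_1}}+\|\phi_2\|_{L^2}\Big|\frac{1}{m_{\phi_1}}-\frac{1}{m_{\phi_2}}\Big| .
\end{equation*}
For the second term, use $|m_{\phi_1}-m_{\phi_2}|=|\int_H(\phi_1-\phi_2)\,d\mu_N|\le\sqrt\mu\,\|\phi_1-\phi_2\|_{L^2}$ and $\|\phi_2\|_{L^2}\le 1+\delta_N$. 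This yields the Lipschitz constant
\begin{equation*}
\bar\gamma\Big(\frac1{m_{\phi_1}}+\frac{(1+\delta_N)\sqrt\mu}{m_{\phi_1}m_{\phi_2}}\Big),
\end{equation*}
which tends to $\bar\gamma$ as $N\to\infty$. After enlarging $N_0$ if necessary, it is bounded by any fixed $\tilde\gamma\in(\bar\gamma,1)$. The Banach fixed point theorem on the complete metric space $B_N$ then gives the unique fixed point $\psi_N$.
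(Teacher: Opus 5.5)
Your proof is correct and follows essentially the same route as the paper: apply the one-step contraction \eqref{eq:tensorized-contraction} to the mean-zero functions $\phi 1_{\mathbb T^N\setminus H_N}/m_\phi-1$ and $\psi_1-\psi_2$, control the mass in the hole by Cauchy--Schwarz, and conclude with the Banach fixed point theorem. The differences are minor. Centring $1_{\mathbb T^N\setminus H_N}$ and $h1_{\mathbb T^N\setminus H_N}$ separately gives the slightly sharper bound $\bar\gamma(\sqrt{\mu_N(H_N)}+\delta_N)/m_\phi$ with an explicit threshold for invariance. You also check that $B_N$ is closed and that $\bar{\mathcal P}_{\mu,H_N}$ is well defined on $B_N$, which the paper leaves implicit.
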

\begin{proof}
Let $\mathcal{P}_{\mu}$ the transfer operator associated with $\{X_n\}_{n \ge 0}$. Furthermore, let $\mathcal{P}_{\mu,N}$ denote the $N$-fold operator associated with $\mathcal{P}_{\mu},$ and  consider the normalized conditioned operator $\bar{\mathcal{P}}_{\mu,H_N}$ as defined in \eqref{condnormal}. 
We first show \eqref{invariance}. 
Let $\phi \in B_N$, where $B_N$ is as in \eqref{bienne}. Then, due to \eqref{eq:tensorized-contraction}
\begin{equation}\label{prelim}
\begin{aligned}
\|\bar{\mathcal{P}}_{\mu,H_N}(\phi)-1\|_{L^2(\mu_N)}
&=
\left\|
\mathcal{P}_{\mu,N}
\left(
\frac{\phi 1_{\mathbb T^N\setminus H_N}}
{\|\phi 1_{\mathbb T^N\setminus H_N}\|_{L^1(\mu_N)}}
-1
\right)
\right\|_{L^2(\mu_N)}
\\
&\le
\bar\gamma
\left\|
\frac{\phi 1_{\mathbb T^N\setminus H_N}}
{\|\phi 1_{\mathbb T^N\setminus H_N}\|_{L^1(\mu_N)}}
-1
\right\|_{L^2(\mu_N)}
\\
&\le
\bar\gamma
\frac{
\|\phi-1\|_{L^2(\mu_N)}
+\sqrt{\mu_N(H_N)}
+\|1_{H_N}\phi\|_{L^1(\mu_N)}
}{
1-\|1_{H_N}\phi\|_{L^1(\mu_N)}
}.
\end{aligned}
\end{equation}
Let $\delta_N$ as in \eqref{deltaenne}. Then, if $\phi \in B_N$, $||\phi-1||_{L^2(\mu_N)} \le \delta_N$ and by Cauchy-Schwarz
\begin{equation}\label{eq:mass-hole-bound}
||1_{H_N}\phi||_{L^1(\mu_N)}
\leq 
\sqrt{\mu_N(H_N)}
\|\phi\|_{L^2(\mu_N)}
\leq
(1+\delta_N)\sqrt{\mu_N(H_N)}.
\end{equation}

Consequently, by \eqref{eq:mass-hole-bound} and \eqref{prelim}
\begin{align*}
\|\bar{\mathcal P}_{\mu, H_N}(\phi)-1\|_{L^2(\mu_N)}
\leq \bar \gamma \left(
\frac{
\delta_N+(2+\delta_N)\sqrt{\mu_N(H_N)}
}{
1-(1+\delta_N)\sqrt{\mu_N(H_N)}
}\right),
\end{align*}
which implies \eqref{invariance} for $N$ large enough.

We now prove the contraction estimate in \eqref{strongcontr}. Let
\(\phi_1,\phi_2\in  B_N\). Then, by \eqref{eq:tensorized-contraction}
\begin{equation}\label{eq:contraction-reduction}
\|\bar{\mathcal P}_{\mu,H_N}(\phi_1)
-\bar{\mathcal P}_{\mu,H_N}(\phi_2)\|_{L^2(\mu_N)}
\leq
\bar \gamma
\left|\left| \frac{\phi_1 1_{\mathbb T^N\setminus H_N}}
{\|\phi_1 1_{\mathbb T^N\setminus H_N}\|_{L^1(\mu_N)}}-\frac{\phi_2 1_{\mathbb T^N\setminus H_N}}
{\|\phi_2 1_{\mathbb T^N\setminus H_N}\|_{L^1(\mu_N)}}  \right|\right|_{L^2(\mu_N)}.
\end{equation}
We write
\begin{equation}\label{formuladiff}
\begin{aligned}
\frac{\phi_1 1_{\mathbb T^N\setminus H_N}}
{\|\phi_1 1_{\mathbb T^N\setminus H_N}\|_{L^1(\mu_N)}}-\frac{\phi_2 1_{\mathbb T^N\setminus H_N}}
{\|\phi_21_{\mathbb T^N\setminus H_N}\|_{L^1(\mu_N)}} 
&=
\frac{1_{\mathbb T^N\setminus H_N}(\phi_1-\phi_2)}{1-\|1_{H_N}\phi_1\|_{L^1(\mu_N)}}
\\
&+
1_{\mathbb T^N\setminus H_N}\phi_2
\left(
\frac{\|1_{H_N}\phi_1\|_{L^1(\mu_N)}-\|1_{H_N}\phi_2\|_{L^1(\mu_N)}}{\left(1-|1_{H_N}\phi_1\|_{L^1(\mu_N)}\right) \left(1-|1_{H_N}\phi_2\|_{L^1(\mu_N)}\right)}
\right).
\end{aligned}
\end{equation}
Since $\phi_1,\phi_2 \in B_N$, we have 
\begin{align}
\left|\|1_{H_N}\phi_1\|_{L^1(\mu_N)}-\|1_{H_N}\phi_2\|_{L^1(\mu_N)}\right|
=
\left|
\int_{H_N}(\phi_1-\phi_2)\,d\mu_N
\right|
\le \sqrt{\mu_N(H_N)}||\phi_1-\phi_2 ||_{L^2(\mu_N)}
\label{eq:mass-difference}
\end{align}
By \eqref{eq:mass-hole-bound},  \eqref{eq:contraction-reduction}, \eqref{formuladiff} and \eqref{eq:mass-difference} we have
\begin{align*}
\left|\left| \frac{\phi_1 1_{\mathbb T^N\setminus H_N}}
{\|\phi_1 1_{\mathbb T^N\setminus H_N}\|_{L^1(\mu_N)}}-\frac{\phi_2 1_{\mathbb T^N\setminus H_N}}
{\|\phi_2 1_{\mathbb T^N\setminus H_N}\|_{L^1(\mu_N)}}  \right|\right|_{L^2(\mu_N)}
&\leq
\frac{1}{1-(1+\delta_N)\sqrt{\mu_N(H_N)}}
\|\phi_1-\phi_2\|_{L^2(\mu_N)}
\\
&\quad+
\frac{2\sqrt{\mu_N(H_N)}}{(1-(1+\delta_N)\sqrt{\mu_N(H_N)})^2}
\|\phi_1-\phi_2\|_{L^2(\mu_N)}\\
&\le \frac{1- (\delta_N-1)\sqrt{\mu_N(H_N)}}{(1-(1+\delta_N)\sqrt{\mu_N(H_N)})^2}\|\phi_1-\phi_2\|_{L^2(\mu_N)}.
\end{align*}
Combining \eqref{eq:contraction-reduction} and the above, we obtain
\begin{equation}
\label{eq:open-contraction}
\|\mathcal {\bar P}_{\mu,H_N}(\phi_1)
-\mathcal {\bar P}_{\mu,H_N}(\phi_2)\|_{L^2(\mu_N)}
\leq 
\bar \gamma \left(\frac{1- (\delta_N-1)\sqrt{\mu_N(H_N)}}{(1-(1+\delta_N)\sqrt{\mu_N(H_N)})^2}\right)
\|\phi_1-\phi_2\|_{L^2(\mu_N)}.
\end{equation}
Because $\bar \gamma \in (0,1)$ and both \(\delta_N,\sqrt{\mu_N(H_N)}\to 0\),  as $N\to \infty$, \eqref{strongcontr} follows.
\end{proof}
In the following Proposition, we show a smoothing-type inequality, which will be key to establish the Sobolev estimate in \eqref{sobolev}.

\begin{proposicao}\label{prop:derivative-estimate-ANOVA}
Let $\{X_n\}_{n \ge 0}$ be a one-dimensional Markov chain satisfying Hypothesis \ref{B}, with its associated stationary measure $\mu$ and transfer operator $\mathcal{P}_{\mu}$.  Let $\mathcal{P}_{\mu,N}$ be the $N$-fold operator associated with $\mathcal{P}_{\mu}$ as in \eqref{uncoupledop}.
Let \(u \subset\{1,\dots,N\}\) be nonempty. Then, for every \(i\in u\),
\begin{align}
\|\partial_{i}\mathcal P_{\mu,N}(\phi)\|_{L^2(\mu_N)}
\le
 C_r  \bar \gamma ^{|u|-1}\,
\|\phi\|_{L^2(\mu_N)},\qquad \forall \phi \in S_u,
\label{eq:single-derivative-ANOVA}
\end{align}
with $C_r$ as in \eqref{regularization} and $\bar \gamma$ as in \eqref{one_step}. 
Furthermore, there exists a constant $\bar{C}_1$, independent of $N$, such that,  if $\phi \in L^2(\mu_N)$ has integral zero, then
\begin{align}
\sum_{i=1}^N
\|\partial_{i}\mathcal P_{\mu,N}(\phi)\|_{L^2(\mu_N)}^2
\le
\bar C_1 
\|\phi\|_{L^2(\mu_N)}^2.
\label{eq:global-gradient-bound}
\end{align}
\end{proposicao}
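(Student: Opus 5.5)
For \eqref{eq:single-derivative-ANOVA} I will use the factorization \eqref{tensordec}, $\mathcal P_{\mu,N}=\mathcal P_1\circ\cdots\circ\mathcal P_N$, where $\mathcal P_j$ acts as $\mathcal P_\mu$ on the $j$-th coordinate only. These operators commute, so for fixed $i\in u$ I can write $\mathcal P_{\mu,N}\phi=\mathcal P_i\big(\prod_{j\neq i}\mathcal P_j\,\phi\big)$. Set $\tilde\phi:=\prod_{j\in u\setminus\{i\}}\mathcal P_j\phi$; the factors with $j\notin u$ act as the identity on $S_u$.

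By the proof of Lemma \ref{preserbationANOVA} with $\mathcal Q_\mu=\mathcal P_\mu$ and $\kappa=\bar\gamma$ (from \eqref{one_step}), each $\mathcal P_j$ with $j\in u$ preserves $S_u$ and contracts it by $\bar\gamma$. Hence $\tilde\phi\in S_u$ and $\|\tilde\phi\|_{L^2(\mu_N)}\le\bar\gamma^{|u|-1}\|\phi\|_{L^2(\mu_N)}$.

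Next I freeze $x_{-i}$. The function $\mathcal P_{\mu,N}\phi(\cdot,x_{-i})=\mathcal P_\mu(\tilde\phi(\cdot,x_{-i}))$ has weak derivative in $x_i$ bounded by \eqref{regularization}:
\[
\|\partial_i\mathcal P_\mu(\tilde\phi(\cdot,x_{-i}))\|_{L^2(\mu)}\le C_r\|\tilde\phi(\cdot,x_{-i})\|_{L^2(\mu)}.
\]
Squaring, integrating in $x_{-i}$ and applying Fubini, exactly as in \eqref{fubini_trick}, gives $\|\partial_i\mathcal P_{\mu,N}\phi\|\le C_r\|\tilde\phi\|\le C_r\bar\gamma^{|u|-1}\|\phi\|$. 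One technical point is that the fiberwise weak derivative must coincide with the $N$-dimensional weak partial derivative $\partial_i$. I would check this with test functions and Fubini, using measurability of $x_{-i}\mapsto\mathcal P_\mu(\tilde\phi(\cdot,x_{-i}))$. That measurability holds because the map is the restriction of a bounded operator on $L^2(\mu_N)\cong L^2(\mu_{N-1};L^2(\mu))$.

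For \eqref{eq:global-gradient-bound} I write $\phi=\sum_{u\neq\emptyset}\phi_u$ by Proposition \ref{ANOVAprop}. The key observation is that $\partial_i\mathcal P_{\mu,N}\phi_u=0$ when $i\notin u$, because $\mathcal P_{\mu,N}\phi_u\in S_u$ depends only on $x_u$. The main obstacle is orthogonality: I need
\[
\|\partial_i\mathcal P_{\mu,N}\phi\|^2=\sum_{u\ni i}\|\partial_i\mathcal P_{\mu,N}\phi_u\|^2.
\]
I plan to show that for $u\ne v$, both containing $i$, the functions $\partial_i\mathcal P_{\mu,N}\phi_u$ and $\partial_i\mathcal P_{\mu,N}\phi_v$ are orthogonal. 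Take $j\in u\triangle v$, say $j\in u\setminus v$. Then $\partial_i\mathcal P_{\mu,N}\phi_u$ has zero $\mu$-mean in $x_j$, since differentiation in $x_i$ commutes with integration in $x_j$ (Fubini on test functions). Meanwhile $\partial_i\mathcal P_{\mu,N}\phi_v$ does not depend on $x_j$. Integrating first in $x_j$ therefore makes the inner product vanish.

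Combining with the first part,
\[
\sum_i\|\partial_i\mathcal P_{\mu,N}\phi\|^2\le\sum_{u\ne\emptyset}|u|\,C_r^2\bar\gamma^{2(|u|-1)}\|\phi_u\|^2\le C_r^2\sup_{k\ge1}k\bar\gamma^{2(k-1)}\,\|\phi\|^2
\]
by \eqref{parseval}. This gives $\bar C_1:=C_r^2\sup_{k\ge1}k\bar\gamma^{2k-2}<\infty$, which is independent of $N$. The step to write carefully is this orthogonality of the differentiated components; the factor $|u|$ is absorbed by the exponential decay $\bar\gamma^{2(|u|-1)}$.
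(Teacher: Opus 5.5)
Your proposal is correct and follows essentially the same route as the paper. You factor $\mathcal P_{\mu,N}\phi=\mathcal P_i(\mathcal P_{u\setminus i}\phi)$, apply \eqref{regularization} fiberwise via Fubini, and extract $\bar\gamma^{|u|-1}$ through Lemma \ref{preserbationANOVA}; you then get \eqref{eq:global-gradient-bound} from orthogonality of the differentiated ANOVA components (integrating first in a coordinate of $u\setminus v$), Parseval, and $\sup_{m\ge1} m\bar\gamma^{2m-2}<\infty$. The only cosmetic differences are that you take $\bar\gamma^{|u|-1}$ from the per-coordinate contraction of $S_u$ inside the lemma's proof, whereas the paper freezes $x_i$ and applies the lemma on $S_{u\setminus i}$, and that you explicitly flag the identification of the fiberwise weak derivative with $\partial_i$, which the paper leaves implicit.
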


\begin{proof}
Let $u \subset \{1,\dots,N\}$ be nonempty. For each $j\in u$, let
$$
\mathcal P_{j}
:=
I^{\otimes(j-1)}
\otimes\mathcal P_{\mu}
\otimes I^{\otimes(N-j)}.
$$
For all $ i\in  u$, set $
\mathcal{P}_{u\setminus i} := \prod_{ j \in u\setminus \{i\}} \mathcal{P}_j.$
If $\phi \in S_u$, then $\partial_{i} \mathcal{P}_{\mu,N}(\phi)  = \partial_{i} \mathcal P_i (\mathcal P_{u\setminus i}(\phi)).$
A Fubini type argument analogous to the one used in \eqref{fubini_trick}, along with \eqref{regularization}, gives
\begin{equation}\label{applied_regul}
||\partial_{i} \mathcal{P}_{\mu,N}(\phi)||_{L^2(\mu_N)}^2 \le C_r^2 ||\mathcal P_{u\setminus i}(\phi)||_{L^2(\mu_N)}^2.
\end{equation}
By the definition of $S_u$ in \eqref{esseu}, if $\phi \in S_u$, then for $\mu$-a.e. $x_i \in \mathbb T$, the function 
$(x_k)_{k \in u\setminus i } \to \phi(x_i,(x_k)_{k \in u\setminus i }) \in S_{u \setminus i}$. Then, Lemma \ref{preserbationANOVA} applied with $\mathcal Q_{\mu}=\mathcal{P}_{\mu}$ and $\kappa =\bar \gamma$, combined with \eqref{applied_regul},  gives \eqref{eq:single-derivative-ANOVA}.

We now prove the smoothing estimate in \eqref{eq:global-gradient-bound}. Let $\phi \in L^2(\mu_N)$ be $\mu_N$-mean zero and let  
\[
\phi=\sum_{\varnothing\neq u\subset\{1,\dots,N\}} \phi_u
\]
be its ANOVA decomposition. Fix \(i\in\{1,\dots,N\}\). Then, by linearity
\begin{equation}\label{orthogonalterms}
\partial_{i}\mathcal P_{\mu,N}(\phi)
=
\sum_{u \colon i \in u }\partial_{i}\mathcal P_{\mu,N}(\phi_u)
\end{equation}
Note that the terms in the above sum are pairwise orthogonal.
Indeed, since $u\neq v$ and $i\in u\cap v$, after exchanging $u$ and $v$ if necessary we may choose $k\in u\setminus v$; necessarily $k\neq i$.. Then, if we denote $x_{-\{i,k\}} = (x_j)_{j \in \{1,\dots,N\}\setminus \{i,k\}}$, we have that
\begin{align*}
&\int \partial_{i}\mathcal P_{\mu,N}(\phi_u)(x)\partial_{i}\mathcal P_{\mu,N}(\phi_v)(x)d\mu_N(x) \\
&= \int \partial_{i}\mathcal P_{\mu,N}(\phi_v)(x_{-\{i,k\}},x_i) \left(\int \partial_{i}\mathcal P_{\mu,N}(\phi_u)(x_{-\{i,k\}},x_i,x_k) d\mu(x_k)\right) d\mu_{N-1}(x_{-\{i,k\}},x_i)\\
&=\int \partial_{i}\mathcal P_{\mu,N}(\phi_v)(x_{-\{i,k\}},x_i) \partial_{i}\left(\int \mathcal P_{\mu,N}(\phi_u)(x_{-\{i,k\}},x_i,x_k) d\mu(x_k)\right) d\mu_{N-1}(x_{-\{i,k\}},x_i) = 0,
\end{align*}
where in the last line we used that $\mathcal{P}_{\mu,N}(\phi_u) \in S_u \cap W^{1,2}(\mu_N)$, due to   \eqref{eq:single-derivative-ANOVA}.
Orthogonality of the terms in \eqref{orthogonalterms} implies that 
\begin{align}
\|\partial_{i}\mathcal P_{\mu,N}(\phi)\|_{L^2(\mu_N)}^2
=
\sum_{u \ni i}
\|\partial_{i}\mathcal P_{\mu,N}(\phi_u)\|_{L^2(\mu_N)}^2.
\label{eq:orthogonal-sum-fixed-i}
\end{align}
Using \eqref{eq:single-derivative-ANOVA} and \eqref{eq:orthogonal-sum-fixed-i},
\begin{align*}
\sum_{i=1}^N
\|\partial_{i}\mathcal P_{\mu,N}(\phi)\|_{L^2(\mu_N)}^2
&=
\sum_{i=1}^N
\sum_{u \ni i}
\|\partial_{i}\mathcal P_{\mu,N}(\phi_u)\|_{L^2(\mu_N)}^2\\
&\le
 C_r^2
\sum_{i=1}^N
\sum_{u\ni i}
\bar \gamma ^{2|u|-2}\,
\|\phi_u\|_{L^2(\mu_N)}^2\\
&=
 C_r^2
\sum_{\varnothing\neq u\subset\{1,\dots,N\}}
|u|\,\bar \gamma^{2|u|-2}\,
\|\phi_u\|_{L^2(\mu_N)}^2.
\end{align*}
Let
\begin{align*}
\bar C_1 := C_r^2 \sup_{m\ge 1} m\bar \gamma^{2m-2}<\infty.
\end{align*}
Then 
\begin{align*}
\sum_{i=1}^N
\|\partial_{i}\mathcal P_{\mu,N}(\phi)\|_{L^2(\mu_N)}^2
&\le
\bar{C}_1
\sum_{\varnothing\neq u\subset\{1,\dots,N\}}
\|\phi_u\|_{L^2(\mu_N)}^2\\
&=
\bar C_1
\|\phi\|_{L^2(\mu_N)}^2,
\end{align*}
where in the last step we used Parseval's identity for the orthogonal ANOVA decomposition.
This proves \eqref{eq:global-gradient-bound}.
\end{proof}
It remains to conclude the proof of Theorem \ref{thm2}
\begin{TheoremproofC}
Let $\{X_n\}_{n\ge 0}$ be a Markov chain that satisfies Hypothesis \ref{B} with stationary measure $\mu$, and let $\{H_N\}_{N\ge 0}$ be  a sequence of holes satisfying \eqref{holesdecay}. Let $B_N$ be as in \eqref{bienne}. By Proposition  \ref{inv_ne}, the conditioned operator $\mathcal{P}_{\mu,H_N}$ admits a non-negative density  $\psi_N \in B_N$ as an eigenfunction associated with the eigenvalue  
\begin{equation}\label{fittiest}
\lambda_N = 1 -\int_{H_N} \psi_N d\mu_N.
\end{equation}
Since $\psi_N \in B_N$, we have 
\begin{equation}\label{halfsobolev}
|\psi_N-1|_{L^2(\mu_N)} \le \delta_N, 
\end{equation}
with $\delta_N$ as in \eqref{deltaenne}.
Observe that, due to  \eqref{eq:global-gradient-bound}
\begin{align*}
||\nabla \psi_N||_{L^2(\mu_N)} &= ||\nabla(\psi_N-1)||_{L^2(\mu_N)} \\&= \left|\left|\nabla \left(\mathcal{P}_{\mu,N}\left(\frac{1_{\mathbb T^N\setminus H_N}  \psi_N}{||1_{\mathbb T^N\setminus H_N} \psi_N||_{L^1(\mu_N)}}-1\right)\right)\right|\right|_{L^2(\mu_N)}\\
&\le \sqrt{\bar{C}_1} \left|\left|\frac{1_{\mathbb T^N\setminus H_N}  \psi_N}{||1_{\mathbb T^N\setminus H_N} \psi_N||_{L^1(\mu_N)}}-1\right|\right|_{L^2(\mu_N)} \\
&\le \frac{\sqrt{\bar C_1}}{1-(1+\delta_N)\sqrt{\mu_N(H_N)}} (\left|\left|1_{\mathbb T^N\setminus H_N} \psi_N-1\right|\right|_{L^2(\mu_N)}+||1_{H_N}\psi_N||_{L^1(\mu_N)}) \\
& \le \frac{\sqrt{\bar C_1}}{1-(1+\delta_N)\sqrt{\mu_N(H_N)}} \left( ||\psi_N-1||_{L^2(\mu_N)}+ 2||1_{H_N}\psi_N||_{L^2(\mu_N)}\right) \\
&\le \frac{\sqrt{\bar C_1}}{1-(1+\delta_N)\sqrt{\mu_N(H_N)}} \left(  2||1_{H_N}||_{L^2(\mu_N)}+ 3|\psi_N-1|_{L^2(\mu_N)}\right) \\
&\le \frac{\sqrt{\bar C_1}}{1-(1+\delta_N)\sqrt{\mu_N(H_N)}} \left(2\sqrt{\mu_N(H_N)}+ 3\delta_N\right).
\end{align*}
Formula \eqref{sobolev} follows then from the definition of $\delta_N$ in \eqref{deltaenne} along with \eqref{halfsobolev}. Formula \eqref{rateofsurv} follows then from \eqref{sobolev} and \eqref{fittiest}.
\end{TheoremproofC}

\section{Proof of Corollary \ref{cor3}}\label{Section6}
\begin{TheoremproofB}

Let $\{Y_n\}_{n \ge0}$ be a Markov chain satisfying Hypothesis \ref{C}. By \cite[Theorem 1.1]{bhattacharya2002unique}, this chain has unique stationary measure $\mu$.
Since its transition kernel has density
$h(u-f(v))$ with respect to Lebesgue measure,  then, $\mu<<\text{Leb}$, and, if $\rho:= \frac{d\mu}{d\text{Leb}},$ invariance gives
\[
\rho(u)
 = \int_{\mathbb{T}} h(u-f(v))\,d\mu(v).
\]
By the standard regularity properties of convolutions
\cite[Proposition 8.10]{folland1999real} and the estimates for $h$ in  \eqref{estimates p},\eqref{diffest}, we have that  $\rho \in C^1(\mathbb{T})$ and satisfies 
\begin{equation}\label{doeb1}
c_1\le \rho(x) \le 1+c_2 \qquad \forall x \in \mathbb{T},
\end{equation}
along with
\begin{equation}\label{doeb2}
||\rho'||_{\infty} \le c_2.
\end{equation}
In particular, because of \eqref{doeb1}, $\mu$ is equivalent to Lebesgue measure.
The transfer operator $\mathcal{P}_{\mu}$ associated with $\{Y_n\}_{n\ge 0}$ reads
\begin{align*}
\mathcal{P}_{\mu}(\phi)(u)  
= \int \phi(v) \frac{h(u-f(v))}{\rho(u)}d\mu(v),
\end{align*}
where $h,f$ are as in Hypothesis \ref{C}. 
To prove Corollary \ref{cor3}, we need to show that $\mathcal P_{\mu}$ satisfies \eqref{one_step} and  \eqref{regularization}.
The regularization estimate in \eqref{regularization} follows from the identity 
\begin{align*}
\mathcal{P}_{\mu}(\phi)(u)' 
= \int \phi(v) \left(\frac{h'(u-f(v))}{\rho(u)}-\frac{\rho'(u) h(u-f(v))}{\rho(u)^2}\right)d\mu(v),
\end{align*}
and the estimates on $h$ and $\rho$ in \eqref{estimates p},\eqref{diffest},\eqref{doeb1},\eqref{doeb2}.

It remains to prove \eqref{one_step}. Let 
\begin{align*}
R(u,v):= \frac{h(u-f(v))}{\rho(u)}.
\end{align*}
Note that
\begin{equation}\label{intrdv}
\int R(u,v)d\mu(v) =   \int  \frac{h(u-f(v))}{\rho(u)}d\mu(v) = \mathcal{P}_{\mu}(1) = 1,
\end{equation}
and also
\begin{equation}\label{intrdu}
\int R(u,v) d\mu(u) = \int  \frac{h(u-f(v))}{\rho(u)}d\mu(u) = \int h(u-f(v))du  = 1.  
\end{equation}
Let $c:=\frac{c_1}{1+c_2} \in (0,1)$, with $c_1,c_2$ as in \eqref{estimates p} and \eqref{diffest}.
Let  $\bar R$  be defined by the formula
\begin{align*}
\frac{h(u-f(v))}{\rho(u)} = c + (1-c)\bar R(u,v).
\end{align*}
By the choice of $c$,  $\bar R \ge 0$ and, by \eqref{intrdu} and \eqref{intrdv}
\begin{equation}\label{erreprime}
\int \bar R(u,v) d\mu(u) = \int \bar R(u,v) d\mu(v) = 1.    
\end{equation}

If $\int \phi d\mu = 0$, then
\begin{align*}
\mathcal{P}_{\mu}(\phi)(u) &= c \int \phi(v) d\mu(v) + (1-c)\int \phi(v) \bar R(u,v) d\mu(v) \\
&=(1-c) \int \phi(v) \bar R(u,v) d\mu(v)  
\end{align*}
By \eqref{erreprime} and Jensen's inequality,
\begin{align*}
\int \mathcal{P}_{\mu}(\phi)(u)^2 d\mu(u) &\le (1-c)^2\int  \left(\int  \phi(v) \bar R(u,v) d\mu(v)  \right)^2   d\mu(u) \\
&\le (1-c)^2\int \int  \phi^2(v) \bar R(u,v) d\mu(v)   d\mu(u) \\
&\le  (1-c)^2\int \left(  \phi^2(v)  \int \bar R(u,v)  d\mu(u)\right) d\mu(v) \\
&\le (1-c)^2\int \phi^2(v) d\mu(v),
\end{align*}
which implies  
\begin{align*}
||\mathcal{P}_{\mu}(\phi)||_{L^2(\mu)}    \le (1-c)|\phi|_{L^2(\mu)}, \qquad \forall \phi \in L^2(\mu)\colon \int \phi d\mu =0,
\end{align*}
and in particular \eqref{one_step} holds with $\bar \gamma = 1-c$.
\end{TheoremproofB}

\appendix
\section{Keller-Liverani result}\label{appenix}
Let $\mathcal B\subset \widetilde{\mathcal B}$ be two Banach spaces satisfying 
\begin{equation}\label{continuousKL}
|f|_{\widetilde{\mathcal B}}
\le \|f\|_{\mathcal B},
\qquad \forall f\in\mathcal B.
\end{equation}

Let $\mathcal P_0$ be an operator 
admitting a rank-one projection $\Pi_0$, with $\Pi_0(1)=1$,  such that
\begin{equation}\label{mixingKL}
\|\mathcal P_0^n-\Pi_0\|_{\mathcal B\to\mathcal B}
\le C\gamma^n,
\qquad \forall n\ge0,
\end{equation}
for some  $C>0$ and $\gamma \in(0,1)$. Furthermore,  let $\mathcal P_\delta$ be an operator satisfying the Lasota--Yorke inequality
\begin{equation}\label{lasotayorkeKL}
\|\mathcal P_\delta^n f\|_{\mathcal B}
\le
C_1\gamma^n\|f\|_{\mathcal B}
+
C_2|f|_{\widetilde{\mathcal B}},
\qquad \forall n\ge0,
\end{equation}
for some $C_1,C_2>0$ and $\gamma \in (0,1)$ as in \eqref{mixingKL}.  Assume
\begin{equation}\label{nonespansionKl}
\max\{||\mathcal{P}_0||_{\mathcal{B}\to\mathcal{B}},|\mathcal{P}_0|_{\widetilde{\mathcal{B}}\to \widetilde{\mathcal{B}}},||\Pi_0||_{\mathcal{B}\to\mathcal{B}},|\Pi_0|_{\widetilde{\mathcal{B}}\to \widetilde{\mathcal{B}}},||\mathcal{P}_{\delta}||_{\mathcal{B}\to\mathcal{B}},|\mathcal{P}_{\delta}|_{\widetilde{\mathcal{B}}\to \widetilde{\mathcal{B}}}\} \le 1
\end{equation}
This assumption is unnecessary, but simplifies significantly the computations, and is satisfied in our setting.

The Keller-Liverani result in \cite{KellerLiverani1999} implies that, for all $\gamma' \in (\gamma,1)$, there exists a $\delta_0$, depending on $\gamma'$ and the constants in \eqref{continuousKL}-\eqref{lasotayorkeKL}, such that, if  $ \delta_0>\delta>0$ satisfies
\begin{equation}\label{perturbationKL}
|(\mathcal P_\delta-\mathcal P_0)f|_{\widetilde{\mathcal B}}
\le
\delta\|f\|_{\mathcal B},
\qquad \forall f\in\mathcal B,
\end{equation}
then, there exist  functions $F=F(\delta),G=G(\delta)$ such that 
\begin{align*}
\sigma(\mathcal P_\delta)
\subset
B_{\gamma'}(0)
\cup
B_{F(\delta)}(1).
\end{align*}
and 
\begin{align*}
|\psi_\delta-1|_{\widetilde{\mathcal B}}
\le
G(\delta),
\end{align*}
where $\psi_{\delta}$ denotes the unique eigenfunction associated with the unique eigenvalue $\lambda_{\delta}\in B_{F(\delta)}(1)$, normalized so that $\Pi_0(\psi_{\delta}) =1$.

The following proposition  follows the computations in \cite{KellerLiverani1999}, in order to find explicit estimates $\delta_0,F,G$, depending on the constants in \eqref{continuousKL}-\eqref{lasotayorkeKL}.

\begin{proposicao}\label{Kl}
Let  $\mathcal{B},\widetilde{\mathcal B},\mathcal{P}_0,\mathcal{P}_{\delta}$ satisfy \eqref{continuousKL}-\eqref{nonespansionKl}, $\gamma'\in(\gamma,1)$, with $\gamma$ as in \eqref{mixingKL} 
and $\beta>0$ satisfying
$0<\beta<\frac{\eta}{2}$, with
\begin{equation}\label{etaKL}
\eta
:=
\frac{\log(\gamma'/\gamma)}
{\log(1/\gamma)}.
\end{equation}
Let $\delta$ as in \eqref{perturbationKL}. Then, there exist $\delta_0>0$, $\bar C_{\beta,\gamma'}>0$, depending only on the constants in \eqref{continuousKL}-\eqref{lasotayorkeKL}, such that, if $\delta<\delta_0$, then 
\begin{equation}\label{spectrumKL}
\sigma(\mathcal P_{\delta})
\subset
B_{\gamma'}(0)
\cup
B_{\delta^\beta}(1),
\end{equation}
and  $B_{\delta^\beta}(1)$ contains a unique algebraically simple eigenvalue $\lambda_\delta$ of $\mathcal P_\delta$.
Furthermore, the eigenfunction $\psi_\delta$ associated with $\lambda_\delta$, normalized so that $
\Pi_0(\psi_\delta)=1$
satisfies
\begin{equation}\label{eigenfunctionKL}
|\psi_\delta-1|_{\widetilde{\mathcal B}}
\le
\bar C_{\beta,\gamma'}\delta^{\eta-\beta}.
\end{equation}
\end{proposicao}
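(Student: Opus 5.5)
The plan is to redo the Keller--Liverani resolvent argument of \cite{KellerLiverani1999}, tracking every constant, and to use \eqref{nonespansionKl} so that all powers of $\mathcal P_0$, $\mathcal P_\delta$ and $\Pi_0$ have norm at most $1$.

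\emph{Step 1: resolvent of $\mathcal P_0$.} From \eqref{mixingKL} I get, for $|z|>\gamma$ and $z\neq 1$, the splitting
\[
(z-\mathcal P_0)^{-1}=\frac{\Pi_0}{z-1}+\sum_{n\ge0}z^{-n-1}(\mathcal P_0^n-\Pi_0).
\]
This holds because $\mathcal P_0\Pi_0=\Pi_0\mathcal P_0=\Pi_0$, which follows from \eqref{mixingKL} as $n\to\infty$. Hence, for $z$ in
\[
V_{\delta}:=\{|z|\ge\gamma'\}\setminus B_{\delta^\beta}(1),
\]
I obtain $\|(z-\mathcal P_0)^{-1}\|_{\mathcal B\to\mathcal B}\le \delta^{-\beta}+C/(\gamma'-\gamma)$. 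The same bound holds on $\widetilde{\mathcal B}$, using the $\widetilde{\mathcal B}$ contraction bounds in \eqref{nonespansionKl}.

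\emph{Step 2: resolvent of $\mathcal P_\delta$ (the main obstacle).} I will show that $z-\mathcal P_\delta$ is invertible on $\mathcal B$ for $z\in V_\delta$, with an explicit bound. The first step is the Lasota--Yorke a priori bound obtained from \eqref{lasotayorkeKL}. Iterating the identity $(z-\mathcal P_\delta)f=g$ gives
\[
\|f\|_{\mathcal B}\lesssim \frac{\gamma^n}{|z|^n}\|f\|_{\mathcal B}+ |f|_{\widetilde{\mathcal B}}+\sum_{k<n}|z|^{-k-1}\|\mathcal{P}_\delta^kg\|_{\mathcal B}.
\]
Choosing $n$ so that $C_1(\gamma/\gamma')^n\le 1/2$ absorbs the first term into the left-hand side.

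The main obstacle is to control $|f|_{\widetilde{\mathcal B}}$ using only the weak smallness \eqref{perturbationKL}. Following \cite{KellerLiverani1999}, I would write
\[
f=(z-\mathcal P_0)^{-1}g+(z-\mathcal P_0)^{-1}(\mathcal P_\delta-\mathcal P_0)f.
\]
The weak norm of the second term is bounded by $\|(z-\mathcal P_0)^{-1}\|_{\widetilde{\mathcal B}}\,\delta\|f\|_{\mathcal B}$. The delicate point is that $\|(z-\mathcal P_0)^{-1}\|_{\widetilde{\mathcal B}\to\widetilde{\mathcal B}}$ is not controlled by \eqref{mixingKL} alone, which is a strong-norm statement. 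I would therefore use the Keller--Liverani trick of splitting the Neumann series. Terms with $n\le m$ are bounded via \eqref{nonespansionKl}, at a cost of roughly $(\gamma')^{-m}$. Terms with $n>m$ are bounded in the strong norm by $C\gamma^m(\gamma')^{-m}$. Balancing $\delta(\gamma')^{-m}$ against $(\gamma/\gamma')^m$ gives $m\approx \log\delta/\log\gamma$, and the resulting weak error is of order $\delta^{\eta}$ with $\eta$ as in \eqref{etaKL}. Together with the factor $\delta^{-\beta}$ from Step 1, this gives a coefficient of order $\delta^{\eta-\beta}$, which is small. The condition $\beta<\eta/2$ is what makes the smallness survive after squaring, in the product of two resolvent factors near $z=1$. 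For $\delta<\delta_0$ I can then close the estimate and bound $\|(z-\mathcal P_\delta)^{-1}\|_{\mathcal B}$ uniformly on $V_\delta$. This yields \eqref{spectrumKL}.

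\emph{Step 3: spectral projections and the eigenfunction.} I define
\[
\Pi_\delta:=\frac{1}{2\pi i}\oint_{|z-1|=\delta^\beta}(z-\mathcal P_\delta)^{-1}\,dz .
\]
Using the second resolvent identity
\[
(z-\mathcal P_\delta)^{-1}-(z-\mathcal P_0)^{-1}=(z-\mathcal P_\delta)^{-1}(\mathcal P_\delta-\mathcal P_0)(z-\mathcal P_0)^{-1},
\]
together with the Step 2 estimates, I obtain
\[
|(\Pi_\delta-\Pi_0)f|_{\widetilde{\mathcal B}}\le \bar C\,\delta^{\eta-\beta}\|f\|_{\mathcal B}.
\]
Once this is $<1$, a standard argument gives $\operatorname{rank}\Pi_\delta=\operatorname{rank}\Pi_0=1$. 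Hence $B_{\delta^\beta}(1)$ contains exactly one eigenvalue $\lambda_\delta$, and it is algebraically simple.

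For the eigenfunction I set $\psi_\delta:=\Pi_\delta 1/\Pi_0(\Pi_\delta1)$, which is well defined because $\Pi_0\Pi_\delta1=1+O(\delta^{\eta-\beta})\neq0$. Then
\[
|\psi_\delta-1|_{\widetilde{\mathcal B}}\le |(\Pi_\delta-\Pi_0)1|_{\widetilde{\mathcal B}}+|1-\Pi_0\Pi_\delta 1|\,|\psi_\delta|_{\widetilde{\mathcal B}}.
\]
Both terms are $O(\delta^{\eta-\beta})$, using \eqref{continuousKL}, $\Pi_0(1)=1$ and \eqref{nonespansionKl}. This gives \eqref{eigenfunctionKL}.
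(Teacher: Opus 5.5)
Your proposal follows the paper's proof essentially step by step. Both use the Lasota--Yorke a priori bound \eqref{aprioriKL}, the bound $\|R_0(z)\|_{\mathcal B\to\mathcal B}\lesssim\delta^{-\beta}+(\gamma'-\gamma)^{-1}$ on $\{|z|\ge\gamma'\}\setminus B_{\delta^\beta}(1)$, and the Keller--Liverani splitting at $m\approx\log\delta/\log\gamma$ producing $\delta^{\eta}$. Both then compare resolvents at order $\delta^{\eta-2\beta}$, integrate over a circle of radius of order $\delta^\beta$, and finish with rank preservation and normalization through $\Pi_0$.

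The only variation is where you split. You split $R_0(z)(\mathcal P_\delta-\mathcal P_0)f$ via the explicit formula for $R_0(z)$, whereas the paper splits $h=R_0(z)A_0(z)h$ and records the weak inequality \eqref{weakresolventKL}. With your estimates, Step 3 closes most directly if you write the identity as $R_\delta(z)-R_0(z)=R_0(z)(\mathcal P_\delta-\mathcal P_0)R_\delta(z)$. Then your weak bound $\lesssim\delta^{\eta-\beta}\|R_\delta(z)f\|_{\mathcal B}$ combines with $\|R_\delta(z)\|_{\mathcal B\to\mathcal B}\lesssim\delta^{-\beta}$.

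Two things need fixing.

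\emph{The weak-norm claim in Step 1.} The sentence asserting the same resolvent bound on $\widetilde{\mathcal B}$ is false under the hypotheses. \eqref{nonespansionKl} only gives $|\mathcal P_0^n-\Pi_0|_{\widetilde{\mathcal B}\to\widetilde{\mathcal B}}\le 2$, so the series converges in the weak norm only for $|z|>1$. For the doubling map with $\mathcal B=BV$ and $\widetilde{\mathcal B}=L^1$, every $|z|<1$ is an $L^1$-eigenvalue. You contradict this claim yourself in Step 2 and never use it, so just delete it.

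\emph{Invertibility in Step 2.} Step 2 only yields a lower bound $\|f\|_{\mathcal B}\le K\delta^{-\beta}\|(z-\mathcal P_\delta)f\|_{\mathcal B}$ for $z\in V_\delta$. That gives injectivity with closed range, not invertibility. Under \eqref{continuousKL}--\eqref{nonespansionKl} there is no compact embedding or residual-spectrum assumption to upgrade this; the paper delegates the point to \cite[Remark~6]{KellerLiverani1999}. You can close it directly:
\begin{itemize}
\item For $\delta^\beta<1-\gamma'$ the set $V_\delta$ is connected.
\item $V_\delta$ contains $\{|z|>1\}$, which lies in the resolvent set since $\|\mathcal P_\delta\|_{\mathcal B\to\mathcal B}\le 1$.
\item At any resolvent point $z\in V_\delta$ the lower bound gives $\|R_\delta(z)\|_{\mathcal B\to\mathcal B}\le K\delta^{-\beta}$, so the disk of radius $\delta^{\beta}/K$ around $z$ is also in the resolvent set.
\end{itemize}
Hence the resolvent set is relatively open and closed in $V_\delta$, so it contains all of $V_\delta$, which is \eqref{spectrumKL}.
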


\begin{proof}
The proof follows the argument in \cite[Theorem 1]{KellerLiverani1999}, keeping track of the dependence of the estimates on $\delta$. For $z\in\mathbb C$, define $A_0(z):=zI-\mathcal P_0$ and $A_\delta(z):=zI-\mathcal P_\delta.$
For $z\in\mathbb{C}$ such that $A_0(z)$ and $A_\delta(z)$ are invertible, denote their inverses by $R_0(z)$ and $R_\delta(z)$, respectively.

\emph{Step 1: a strong a priori estimate.}

Let $g=A_\delta(z)f.$ Then, for every $m\ge1$ and $z \in \mathbb{C}$
\begin{align*}
z^mf
=
\mathcal P_\delta^m f
+
\sum_{j=0}^{m-1}
z^{m-1-j}\mathcal P_\delta^jg.
\end{align*}
 Using \eqref{lasotayorkeKL} and \eqref{nonespansionKl}, if $|z|\ge\gamma'$, we obtain
\begin{align*}
|z|^m\|f\|_{\mathcal B}
&\le
C_1\gamma^m\|f\|_{\mathcal B}
+
C_2|f|_{\widetilde{\mathcal B}}
+
\sum_{j=0}^{m-1}
|z|^{m-1-j}\|\mathcal P_\delta^jg\|_{\mathcal B}.
\end{align*}
Choosing, in the above, $m_0\ge1$ such that $C_1\left(\frac{\gamma}{\gamma'}\right)^{m_0}
\le
\frac12,$ and using again \eqref{lasotayorkeKL}, \eqref{continuousKL} and \eqref{nonespansionKl} we find $D_1,D_2>0$, independent of $\delta$ and $z$, such that
\begin{equation}\label{aprioriKL}
\|f\|_{\mathcal B}
\le
D_1\|A_\delta(z)f\|_{\mathcal B}
+
D_2|f|_{\widetilde{\mathcal B}}.
\end{equation}

\emph{Step 2: spectral gap of the perturbed operator.}

Let $z \in\mathbb C$ such that $|z| \in (\gamma',2)$  and $|1-z|> \delta^\beta$. Note that, for  every $n\ge1$,
\begin{align*}
A_0(z)^{-1}
=
z^{-n}A_0(z)^{-1}\mathcal P_0^n
+
\sum_{j=0}^{n-1}z^{-j-1}\mathcal P_0^j.
\end{align*}
Let $g=A_0(z)h$. By the estimates for the unperturbed operator in  \eqref{mixingKL}, one can prove that there exists $C_0>0$ such that for our choice of $z$,
\begin{equation}\label{resolvent0KL}
\|R_0(z)\|_{\mathcal B\to\mathcal B}
\le C_0\left(
\frac{1}{\delta^{\beta}}
+
\frac{1}{\gamma'-\gamma}\right),
\end{equation}
Using \eqref{mixingKL} and the above, we obtain that there exists $C$ that depends on $\gamma'$, such that
\begin{align}
|h|_{\widetilde{\mathcal B}}
&\le C\delta^{-\beta}\left(
\left(\frac{\gamma}{\gamma'}\right)^n
\|g\|_{\mathcal B}
+
(\gamma')^{-n}|g|_{\widetilde{\mathcal B}}\right).
\label{weakunperturbedKL}
\end{align}

Furthermore, observe that  $g =  (\mathcal P_\delta-\mathcal P_0)h + A_\delta(z)h.$  Hence, by \eqref{perturbationKL},
\begin{equation}\label{gcomparisonKL}
|g|_{\widetilde{\mathcal B}}
\le
\delta \|h\|_{\mathcal B}
+
|A_\delta(z)h|_{\widetilde{\mathcal B}}.
\end{equation}

Set $$
m_\delta
:=
\left\lceil
\frac{\log(\delta^{-1})}
{\log(\gamma^{-1})}
\right\rceil.$$ 
Then, there exists $\bar C>0,$ independent of $\delta$, such that 
\[
\gamma^{m_\delta}
\le
\bar C\delta,
\qquad
\left(\frac{\gamma}{\gamma'}\right)^{m_\delta}
\le
\bar C\delta^\eta,
\qquad
\delta(\gamma')^{-m_\delta}
\le
\bar C\delta^\eta.
\]

Taking $n=m_\delta$ in \eqref{weakunperturbedKL}, using the above estimates, \eqref{gcomparisonKL} and then \eqref{aprioriKL}, we obtain constants $C_1,C_2,C_3>0$, independent of $\delta$, such that
\begin{align*}
|h|_{\widetilde{\mathcal B}}
\le
C_1\delta^{\eta-\beta}\|h\|_{\mathcal B}
+
C_2\delta ^{\eta-\beta}|h|_{\widetilde{\mathcal B}}
+
C_3\delta^{\eta-1-\beta}
|A_\delta(z)h|_{\widetilde{\mathcal B}}.
\end{align*}
Suppose that $\delta$ satisfies 
\begin{equation}\label{first_delta_bound}
C_2\delta^{\eta-\beta}<\frac{1}{2}.
\end{equation}
Then, for such $\delta$
\begin{equation}\label{weakresolventKL}
|h|_{\widetilde{\mathcal B}}
\le
2C_1\delta^{\eta-\beta}\|h\|_{\mathcal B}
+
2C_3\delta^{\eta-1-\beta}
|A_\delta(z)h|_{\widetilde{\mathcal B}}.
\end{equation}
Combining this estimate with \eqref{aprioriKL} and arguing as in \cite[Remark~6]{KellerLiverani1999}, we obtain, after reducing
$\delta_0$ if necessary, that $R_\delta(z)$ exists and is a bounded linear operator on $\mathcal{B}$ for all $0<\delta<\delta_0$,
$|z|\in(\gamma',2)$ and $z\notin B_{\delta^\beta}(1)$. 
As a result, we have
\begin{equation}\label{spectralcontainmentKL}
\sigma(\mathcal P_\delta)
\subset
B_{\gamma'}(0)
\cup
B_{\delta^\beta}(1).
\end{equation}

\emph{Step 3: comparison of the resolvents.}

Let $g:= (\mathcal P_\delta-\mathcal P_0)R_0(z)f.$  Applying \eqref{weakresolventKL} to $R_\delta(z)g$, we obtain 
\begin{align*}
|R_{\delta} g|_{\widetilde{\mathcal B}} \le 2C_1 \delta^{\eta-\beta}  ||R_{\delta}g||_{\mathcal B} + 2C_3\delta^{\eta-1-\beta}|g|_{\widetilde{\mathcal{B}}}
\end{align*}
Combining the above with \eqref{aprioriKL}, we get 
\begin{align*}
|R_{\delta}g|_{\widetilde{\mathcal B}} \le 2C_1 \delta^{\eta-\beta} D_1 ||g||_{\mathcal B}  + 2C_1 \delta^{\eta-\beta} D_2 |R_{\delta} g|_{\widetilde{\mathcal B}} + 2C_3\delta^{\eta-1-\beta}|g|_{\widetilde{\mathcal{B}}}.
\end{align*}
If $\delta$ is small enough so that
\begin{equation}\label{second_step_delta}
2C_1 \delta^{\eta-\beta} D_2 <\frac12,
\end{equation}
using the resolvent equation $R_\delta(z)-R_0(z)
=
R_\delta(z)
(\mathcal P_\delta-\mathcal P_0)
R_0(z),$ and the above estimate, we get 
\begin{align*}
|(R_\delta(z)-R_0(z))f|_{\widetilde{\mathcal B}}
&\le 4C_1 \delta^{\eta-\beta} D_1 ||g||_{\mathcal B}  + 4C_3\delta^{\eta-1-\beta}|g|_{\widetilde{\mathcal{B}}} \\
&\le (4C_1 C  D_1 +4C_3C )\delta^{\eta-2\beta}
\|f\|_{\mathcal B},
\end{align*}
where in the last inequality we used \eqref{perturbationKL} and \eqref{resolvent0KL}.
Set  $\bar D := 4C_1 C  D_1 +4C_3C.$ Then, we conclude 
\begin{equation}\label{resolventcomparisonKL}
|(R_\delta(z)-R_0(z))f|_{\widetilde{\mathcal B}}
\le \bar D\delta^{\eta-2\beta}
\|f\|_{\mathcal B}.
\end{equation}

\emph{Step 4: the spectral projection near $1$.}
Let $\delta$ be small enough such that
\begin{equation}
\delta^\beta
<
\frac{1-\gamma'}{2},
\end{equation}
and let  $\Gamma_\delta$ be the boundary of the disk of radius $2 \delta^{\beta}$ around $1$. Define the Riesz projections 
\begin{align*}
\Pi_\delta
&:=
\frac{1}{2\pi i}
\int_{\Gamma_\delta}
R_\delta(z)\,dz,
\\
\Pi_0
&:=
\frac{1}{2\pi i}
\int_{\Gamma_\delta}
R_0(z)\,dz.
\end{align*}
By \eqref{resolventcomparisonKL},
\begin{equation}\label{projectionestimateKL}
\begin{aligned}
|\Pi_\delta-\Pi_0|_{\mathcal B\to\widetilde{\mathcal B}}
&\le
\frac{1}{2\pi}
\int_{\Gamma_\delta}
|R_\delta(z)-R_0(z)|_{\mathcal B\to\widetilde{\mathcal B}}
\,|dz|
\\
&\le
C\delta^\beta
\delta^{\eta-2\beta}
\\
&=
C\delta^{\eta-\beta}.
\end{aligned}
\end{equation}

\emph{Step 5: estimate for the eigenfunction.}
By \cite[Remark 4]{KellerLiverani1999}, if  $\delta$ is small enough, the rank of the spectral projection is preserved. Since $\Pi_0$ has rank one, $B_{\delta^\beta}(1)$ contains exactly one spectral value $\lambda_\delta$, counted with algebraic multiplicity. In particular, $\lambda_\delta$ is algebraically simple. Set $\widehat{\psi}_\delta:=\Pi_\delta(1).$ Since $\Pi_0(1)=1$, by \eqref{projectionestimateKL},
\begin{align*}
|\widehat{\psi}_\delta-1|_{\widetilde{\mathcal B}}
\le
C\delta^{\eta-\beta}.
\end{align*}
Since $\Pi_0$ has rank one, let $a_\delta\in\mathbb C$ be such that $\Pi_0(\widehat{\psi}_\delta)=a_\delta 1.$

Then
\begin{align*}
|a_\delta-1|
\le
C|\widehat{\psi}_\delta-1|_{\widetilde{\mathcal B}}
\le
C\delta^{\eta-\beta}.
\end{align*}
Hence, the normalized eigenfunction  $\psi_\delta
:=
a_\delta^{-1}\widehat{\psi}_\delta$ satisfies
\begin{align*}
|\psi_\delta-1|_{\widetilde{\mathcal B}}
&\le
|a_\delta^{-1}|
|\widehat{\psi}_\delta-1|_{\widetilde{\mathcal B}}
+
|a_\delta^{-1}-1|
|1|_{\widetilde{\mathcal B}}
\\
&\le
C_{\beta,\gamma'}
\delta^{\eta-\beta}.
\end{align*}
This concludes the proof.
\end{proof}
\bibliographystyle{plain} 
\bibliography{biblio}
\end{document}